\newtheorem{thm}{Theorem}
\newtheorem{lem}{Lemma}
\newtheorem{prop}{Proposition}
\newtheorem{cor}{Corollary}
\newtheorem{conj}{Conjecture}
\newcommand{\ignore}[1]{}
\newcommand{\ol}{\overline}
\newcommand{\Phip}{\mathbf{\Phi}^{1/2}}
\newcommand{\Phim}{\mathbf{\Phi}^{-1/2}}
\newcommand{\bA}{\mathbf{A}}
\newcommand{\bD}{\mathbf{D}}
\newcommand{\bI}{\mathbf{I}}
\newcommand{\bM}{\mathbf{M}}
\newcommand{\bP}{\mathbf{P}}
\newcommand{\bone}{\mathbf{1}}
\newcommand{\bu}{\mathbf{u}}
\newcommand{\bv}{\mathbf{v}}
\newcommand{\bU}{\mathbf{U}}
\newcommand{\bx}{\mathbf{x}}
\newcommand{\bJ}{\mathbf{J}}
\newcommand{\by}{\mathbf{y}}
\begin{document}
\title{Concentration of the Stationary Distribution on General Random Directed Graphs}
\author{Franklin H.J. Kenter\thanks{fhk2@rice.edu \newline
 Department of Computational and Applied Mathematics; Rice University; Houston, TX 77005}}
\maketitle

%


%
%

\begin{abstract}
We consider a random model for directed graphs whereby an arc is placed from one vertex to another with a prescribed probability which may vary from arc to arc. Using perturbation bounds as well as Chernoff inequalities, we show that the stationary distribution of a Markov process on a random graph is concentrated near that of the ``expected'' process under mild conditions. These conditions involve the ratio between the minimum and maximum in- and out-degrees, the ratio of the minimum and maximum entry in the stationary distribution, and the smallest singular value of the transition matrix. Lastly, we give examples of applications of our results to well-known models such as PageRank and $G(n,p).$
%
%
\end{abstract}

\section{Introduction}
The study of random matrices, their spectra, and eigenvectors has a long and rich history.
Much focus has been placed on the case where the entries are identically and independently  distributed- for which the distribution of eigenvalues, and even eigenvectors, is well-known \cite{bai, KnowlesJin, tv1, tv2}.

The inspiration for this work comes from beyond of random matrix theory. Over the past decade, there have been various Markov chain ranking algorithms. Specifically, these algorithms use data in order to construct a Markov chain, then use the components of the stationary distribution in order to rank and compare elements. Examples of such algorithms include PageRank  by Brin and Page \cite{LargeScale}, and HITS  by Kleinburg \cite{HITS} for web search;  NCAA Random Walkers by Callaghan, Mucha, and Porter \cite{NCAARW}, and Logarithmic Regression Markov Chain (LRMC) by Kvam and Sokol \cite{LRMC} for sports ranking; and NetRank for cancer research \cite{Cancer}. Several of these algorithms have demonstrated to be quite effective.  LRMC is able to predict basketball game outcomes more accurately than statistical methods \cite{LRMC}. Additionally, NetRank was capable of finding previously unknown gene markers related to pancreatic cancer \cite{Cancer}. Despite these successes, there does not seem to be any statistical nor probabilistic indication as to \emph{why} these algorithms work so well. Using a random graph model, we aim to develop to establish a justification as to why these types of algorithms can be effective.

Our random graph model, described in detail in the next section, places arcs from vertex $i$ to vertex $j$ with a predetermined probability, for each pair independently. These probabilities may differ from pair to pair, and may differ in direction. The undirected version of this model, with slight variations has been studied by Chung, Lu, and Vu \cite{FLV1,FLV2}, Oliveira \cite{O}, Chung and Radcliffe \cite{Mary}, and Lu and Peng \cite{LuPeng}. 

In the works mentioned above, the study focused on the eigenvalues of the adjacency matrix for random graphs and not the eigenvectors, as the principal eigenvector for undirected graphs corresponds to the degrees of the vertices. Hence, for undirected graphs, the entries for the principal eigenvector of the adjacency matrix can only be quadratically small and not interesting to study. However, in the directed case, there are examples of almost regular directed graphs whose principle eigenvector (i.e., stationary distribution) has exponentially small entries (we give such an example in section \ref{examplesGR}). In particular, the principle eigenvector is extremely sensitive to arc-deletion: the removal of just a few arcs can change the principle eigenvector from a uniform distribution to having exponentially small entries.

In this paper, we give criteria as to when the stationary distribution of a Markov chain on a general random directed graph  is close to the ``expected'' stationary distribution. We show that such concentration occurs when the range of in- and out-degrees as well as values of the ``expected'' stationary distribution are well-behaved and when the spectral gap is small. A full statement can be found in Theorem \ref{mainresult}. 

This paper is organized as follows: In Section \ref{pre} we give the preliminaries and notation. Next, inSection \ref{main}, we describe the main result.  In following two sections, we develop tools needed to prove our main results: perturbation bounds in Section \ref{Cauch} and probabilistic concentration inequalities in Section \ref{concen}. Finally, we prove our result using these tools in Section \ref{mainproof} and provide examples in Sections \ref{examplesGR}, \ref{exampleGNP}, and \ref{examplePR}.

\section{Preliminaries}\label{pre}

Let $\bI$ and $\mathbf{0}$ denote the identity and zero matrices, respectively. Let $\bv^*$ and $\bM^*$ denote the conjugate transpose of a vector $\bv$ and matrix $\bM$, respectively. Throughout, all vectors are column vectors. A right eigenvector of $\bM$ with eigenvalue $\lambda$ is a vector $\bv$ such that $\bM \bv = \lambda \bv$ for some $\lambda \in \mathbb{C}$. For our purposes, a left eigenvector with eigenvalue $\lambda$ is a vector $\bv$ such that $\bv^* \bM = \lambda \bv^*$. A left generalized eigenvector with eigenvalue $\lambda$ is a vector $\bv$ such that $ \bv^* (\bM - \lambda \bI)^k = \mathbf{0}$ for some positive integer $k$.

For a directed graph $G = (V, E)$, we denote an arc from vertex $i$ to $j$ as $i \to j$. We  denote the in- and out-degrees of vertex $i$ as $d_i^{in}$ and $d_i^{out}$ respectively and the (not necessarily symmetric) adjacency matrix as $\bA$.
We consider the associated random walk on $G$ whose $n \times n$ transition  probability matrix, denoted $\bP$, given by:

$$\bP_{ij} = \frac{1}{d^{out}_i}$$

We can also express $\bP$ as $\bD^{-1} \bA$ where $\bD = diag(d_1^{out}, \ldots, d_n^{out})$ is the out-degree diagonal matrix. A stationary distribution of a Markov chain is a distribution $\phi$ such that for any non-negative vector $\bv \in \mathbb{R}^n$ that sums to 1, $\lim_{k \to \infty} \bv^* \bP^k = \phi$. Under the mild conditions of irreducibility and ergodicity, the stationary distribution exists and is unique \cite{af}. For the remainder, we shall assume that our Markov chains are irreducible and ergodic.



We will use various norms in order to compare distributions, we will let  $\| \mathbf{x} \|_1$, $\| \mathbf{x} \|_2$,  $\| \mathbf{x} \|_\infty$ denote the 1-, 2-, and $\infty$-norm of $\bx$ respectively. When the subscript is omitted, it will denote the 2-norm unless otherwise specified. Additionally, given a distribution vector $\phi$ (usually a stationary distribution), we will let $\|\bx\|_{2,\phi}$ denote the {\it chi-squared norm with respect to} $\phi$ defined as:

\[ \|\bx\|_{2,\phi}^2 :=  \sum_i \frac{\bx_i^2}{\phi_i} = \|\bx^* \Phim \|_2^2 \]

where $\Phip = {diag}~\phi$ is the diagonal matrix of $\phi$. The chi-squared norm is equivalent to several other norms such as total-variation and relative point wise distance \cite{af, SGT}.

For a random variable $X$ we let $\mathbb{E}X$ denote the expectation of $X$, and for any event $A$ we will denote the its probability $\mathbb{P}(A)$. 

Throughout, we will consider a random graph model with different edge weights. Let $\ol{\bA}$ be the {\it expected adjacency matrix} where $0 \le \ol{\bA}_{i,j} \le 1$ for all $i, j$. Given $\ol{\bA}$ we create a random adjacency matrix, $\bA$, where $\bA_{i,j} = 1$ with probability $\bA_{i,j}$ and $\bA_{i,j}=0$ otherwise, independent of all other arcs. Hence, $\mathbb{E} \bA_{i,j} = \ol{\bA}_{i,j}$.
Using the bar notation, we let $\ol d_i^{in}$ and $\ol d_i^{out}$ denote the average in- and out-degrees for vertex $i$, and $\ol d_{min}^{in}$, $\ol d_{max}^{in}$, $\ol d_{min}^{out}$, $\ol d_{max}^{out}$ to denote the minimum/maximum average in-/out-degrees, correspondingly, and $\ol d_{min} = \min [\ol d_{min}^{in}, \ol d_{min}^{out}]$ and $\ol d_{max} = \min [\ol d_{max}^{in}, \ol d_{max}^{out}]$.
Likewise, we define $\ol \bD = diag(\ol d_1^{out}, \ldots, \ol d_n^{out})$ to be the \emph{expected diagonal out-degree matrix}. We also have the \emph{``expected'' probability matrix}, $\ol \bP \colon= \ol \bD^{-1} \ol \bA$. Note that the entries of $\ol \bP$ are not necessarily the average values of $\bP$.

Given a matrix $\bA$, the {\it pseudoinverse} of $\bA$, denoted $\bA^+$, is the unique matrix such that $\bA \bA^+ \bA = \bA$ and $\bA^+ = \bA^+ \bA \bA^+$, and we will use $0 \le \sigma_1(\bM) \le \sigma_2 (\bM) \le \ldots \le \sigma_n(\bM)$ to denote the singular values of $\bM$. By the singular value decomposition, $\|\bM^+ \|$ is $\sigma_i(\bM)^{-1}$ where $\sigma_i(\bM)$ is the smallest non-zero singular value.

Throughout we use ``little-o'' and $\gg$ asymptotic notation. We say $f(x) = o(g(x))$ or, equivalently,$f(x) \gg g(x)$ if $\lim_{x\to \infty} \frac{g(x)}{f(x)} = 0.$

\section{Main Result}\label{main}

\begin{thm}[Concentration of $\phi$ to $\ol \phi$]\label{mainresult}
Let $\bA$ denote an $n \times n$ random (0,1)-matrix where each entry is a Bernoulli random variable whose parameter is given by the corresponding entry in a expected degree matrix $\ol \bA$. Let $\bP$ be the (random) Markov transition matrix corresponding to $\bA$ with stationary distribution vector $\phi$. Let $\ol \bP = \ol \bD^{-1} \ol \bA$ with left eigenvector $\ol \phi$ with eigenvalue 1 where $\ol \bD$ is the expected out-degree matrix. If the expected minimum in- and out-degrees obey $\ol d_{min} \gg n^k$ for some $k>0$, then with probability at least 

\[ 1 - 4 n  \exp\left(\frac{- d_{min}^{1/3}}{3} \right). \]

we have
\begin{eqnarray*}
&&\| \phi - \ol \phi \|_{2, \ol \phi} \le \\ 
 &&\left(1+\frac{1}{\ol d_{min}^{1/3}} \right) \left [ \frac{\left(\ol d^{in}_{max} \ol d^{out}_{max} \right)^{1/4}}{\ol d^{out}_{min}} + \frac{\sqrt{\ol d^{in}_{max} \ol d^{out}_{max}}}{{\ol d_{min}^{4/3}}}  \right ] \left(\frac{ \ol\phi_{max}}{\ol\phi_{min}}\right)^{1/2} \frac{1}{\sigma_2(\bI-\bP)}
\end{eqnarray*}

where $\sigma_2(\bI-\bP)$ is the second smallest singular value of $\bI-\bP$.

\end{thm}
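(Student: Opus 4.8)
The plan is to control $\|\phi - \ol\phi\|_{2,\ol\phi}$ by viewing $\phi$ as a perturbation of $\ol\phi$ under the transition from $\ol\bP$ to $\bP$, and to split the analysis into a deterministic perturbation estimate and a probabilistic estimate of the size of the perturbation. First I would set $\bP = \ol\bP + \bE$, where $\bE = \bP - \ol\bP = \bD^{-1}\bA - \ol\bD^{-1}\ol\bA$, and note that both $\phi$ and $\ol\phi$ are left eigenvectors with eigenvalue $1$, i.e.\ $\phi^*(\bI - \bP) = \mathbf{0}$ and $\ol\phi^*(\bI - \ol\bP) = \mathbf{0}$. Writing $\phi = \ol\phi + \bx$, subtracting these relations yields $\bx^*(\bI - \ol\bP) = \ol\phi^* \bE + \bx^* \bE$ (up to sign bookkeeping), so that $\bx^*$ is recovered by applying a pseudoinverse of $\bI - \ol\bP$ on the complement of its kernel. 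Since $\|(\bI-\ol\bP)^+\| = \sigma_2(\bI - \ol\bP)^{-1}$ (the factor $\sigma_2$ rather than $\sigma_1$ because $\bI - \bP$, hence presumably $\bI - \ol\bP$, is singular with the stationary vector in its left kernel), this is where the $1/\sigma_2(\bI-\bP)$ in the bound enters; I would expect that the perturbation section of the paper supplies the precise Cauchy-type bound, including how the chi-squared norm weighting by $\ol\phi$ interacts with the Euclidean pseudoinverse bound, contributing the $(\ol\phi_{max}/\ol\phi_{min})^{1/2}$ factor.

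Next I would bound $\|\ol\phi^* \bE\|$ (in the appropriate weighted norm) probabilistically. The key is that $\bE = \bD^{-1}\bA - \ol\bD^{-1}\ol\bA$ can be decomposed into (i) the ``centered adjacency'' contribution $\ol\bD^{-1}(\bA - \ol\bA)$ and (ii) the ``degree fluctuation'' contribution coming from $\bD^{-1} - \ol\bD^{-1}$ acting on $\bA$. For (i), each entry of $\bA - \ol\bA$ is a centered Bernoulli, and the relevant linear functionals of it are sums of independent bounded random variables, so the Chernoff/concentration inequalities from Section~\ref{concen} apply; the row scaling by $\ol d_i^{out}$ and column structure tied to in-degrees is what produces the $(\ol d_{max}^{in}\ol d_{max}^{out})^{1/4}/\ol d_{min}^{out}$ term. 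For (ii), I would first establish that with high probability every realized degree $d_i^{out}$ is within a $(1 \pm \ol d_{min}^{-1/3})$-factor of $\ol d_i^{out}$ — this is again a Chernoff bound on a sum of independent Bernoullis, and summing the failure probabilities over the $n$ vertices (and over in-degrees as well) is exactly what gives the $4n\exp(-d_{min}^{1/3}/3)$ failure probability and the $(1 + \ol d_{min}^{-1/3})$ prefactor. On this good event, $\|\bD^{-1} - \ol\bD^{-1}\|$ is controlled, and combined with a crude bound on $\|\bA\|$ (roughly $\sqrt{\ol d_{max}^{in}\ol d_{max}^{out}}$) this yields the second bracketed term $\sqrt{\ol d_{max}^{in}\ol d_{max}^{out}}/\ol d_{min}^{4/3}$.

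Finally, I would handle the self-referential term $\bx^* \bE$: since $\bE$ is small with high probability, the equation $\bx^* = (\ol\phi^*\bE + \bx^*\bE)(\bI-\ol\bP)^+$ can be solved by a Neumann-series / fixed-point argument, giving $\|\bx\| \lesssim \|\ol\phi^*\bE\|\,\|(\bI-\ol\bP)^+\| / (1 - \|\bE\|\,\|(\bI-\ol\bP)^+\|)$, and under the hypothesis $\ol d_{min} \gg n^k$ the denominator is $1 + o(1)$, which gets absorbed into the $(1 + \ol d_{min}^{-1/3})$ factor. Assembling the three pieces — the pseudoinverse bound, the two concentration estimates for $\bE$, and the weighting conversion to the $\|\cdot\|_{2,\ol\phi}$ norm — and taking a union bound over all the failure events gives the stated inequality. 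The main obstacle I anticipate is the second one: getting the degree-fluctuation term $\bD^{-1} - \ol\bD^{-1}$ to contribute only at the order $\ol d_{min}^{-4/3}$ rather than something weaker requires carefully pairing the $\ol d_{min}^{-1/3}$ relative-error control of the degrees with the exponent bookkeeping in the chi-squared norm, and ensuring the crude operator-norm bound on $\bA$ is actually valid with the claimed probability rather than needing a sharper (and more delicate) random-matrix estimate.
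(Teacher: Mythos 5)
Your overall architecture --- a deterministic perturbation identity whose error term is then controlled by Chernoff-type concentration of the degrees and of $\|\bA\|$ and $\|\bA - \ol\bA\|$ --- matches the paper, and your splitting of the perturbation into the centered-adjacency piece $\ol\bD^{-1}(\bA - \ol\bA)$ and the degree-fluctuation piece $(\ol\bD^{-1} - \bD^{-1})\bA$ is exactly the decomposition used in the paper's final computation. The difference, and the source of a genuine gap, is where you anchor the perturbation identity. You linearize about $\ol\bP$: writing $\phi = \ol\phi + \bx$ and $\bP = \ol\bP + \mathbf{E}$ you arrive at $\bx^*(\bI - \ol\bP) = \ol\phi^*\mathbf{E} + \bx^*\mathbf{E}$, which forces you to dispose of the self-referential term $\bx^*\mathbf{E}$ by a Neumann-series argument with denominator $1 - \|\mathbf{E}\|\,\|(\bI-\ol\bP)^+\|$. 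Your claim that this denominator is $1+o(1)$ under $\ol d_{min} \gg n^k$ is not justified: that hypothesis makes $\|\mathbf{E}\|$ small, but the theorem places no upper bound whatsoever on $\|(\bI - \ol\bP)^+\| = \sigma_2(\bI-\ol\bP)^{-1}$, which can be exponentially large (the Gambler's-Ruin example in Section \ref{examplesGR} is exactly such a case). In that regime your fixed-point argument does not close, whereas the theorem is still asserted (it merely becomes vacuous because its right-hand side is huge). A second, related discrepancy: your route produces $\sigma_2(\bI - \ol\bP)$ in the final bound, while the theorem --- as the paper emphasizes in its discussion --- deliberately features the singular value of the \emph{random} matrix $\bI - \bP$.

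The paper avoids both problems by anchoring at the random $\bP$ instead. Its Proposition \ref{cauchy} is an exact, non-perturbative identity: for any probability vector $\bv$, setting $\varepsilon^* = \bv^* - \bv^*\bP$ one has $\phi^* - \bv^* = \varepsilon^*(\bI - \bP)^+$, obtained by telescoping $\bv^*\bP^k = \bv^* + \varepsilon^* + \varepsilon^*\bP + \cdots + \varepsilon^*\bP^{k-1}$ and summing the resulting series via the spectral decomposition (Lemmas \ref{isperp} through \ref{0pi}). Taking $\bv = \ol\phi$ and using $\ol\phi^* = \ol\phi^*\ol\bP$ gives $\varepsilon^* = \ol\phi^*(\ol\bP - \bP)$, which is precisely your first-order term $-\ol\phi^*\mathbf{E}$ --- but now with no second-order correction to absorb and with $(\bI-\bP)^+$, not $(\bI-\ol\bP)^+$, appearing naturally. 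To salvage your version you would either need to add the hypothesis $\|\mathbf{E}\|\,\sigma_2(\bI-\ol\bP)^{-1} < 1$ (changing the theorem) or switch to the telescoping identity. The probabilistic half of your argument is sound and matches the paper's Lemmas \ref{simcondeg} through \ref{A-Abound}, including the origin of the $4n\exp(-d_{min}^{1/3}/3)$ failure probability and the $(1+\ol d_{min}^{-1/3})$ prefactor.
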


We now discuss the role of the four terms. The first term, due to the hypothesis $d_{min} \gg n^k$, is asymptotically 1. The second term is the only term with the possibility of being substantially smaller than 1. Most notably, for a dense graph (i.e., the degrees are $O(n)$), then the second term is $O(n^{-1/2})$. The third term measures the spread of the components in the stationary distribution. The third term can be exponentially large  (see for example Gambler's ruin in Section \ref{examplesGR}. However, in many cases it can be quite small. For example, in an undirected graph, the third term is simply $\sqrt{\frac{ d_{max}}{ d_{min}}}$, which is 1 if the graph is regular. The fourth term is different. First, we must emphasize that the matrix $\bI - \bP$ in the statement of the result, is in fact a random matrix. Hence, the result does not assure that any randomly generated graph has a concentrated stationary distribution. Rather, the result gives a test to see if a randomly generated graph has the ``expected'' stationary distribution. In particular, one could positively determine if a Markov ranking algorithm (such as those mentioned in the Introduction) outputs a plausible ranking. In any case, $ \frac{1}{\sigma_2(\bI-\bP)}$ can be as small as $1-\epsilon$ (for example, the a walk on the complete graph), However, it can be quite large as well (for example, Gambler's Ruin). In simple terms, our result says that the randomly determined $\phi$ is sufficiently close to $\bar \phi$ provided (1) the spread of the degrees is not too large, (2) the spread of the components in $\ol \phi$ is not too large, and (3) the norm $ \frac{1}{\sigma_2(\bI-\bP)}$ not too large.

\section{Cauchy Properties of Random Walks}\label{Cauch}
The goal of this section is to show that if the one-step total change in a random walk is small, then, in fact, the distribution is ``close'' to the stationary distribution. How close is dependent on the ``singular gap:'' $\frac{1}{\sigma_2(\bI-\bP)}_2$. 

 In particular, we prove the following:

\begin{prop}\label{cauchy}
Let $\bv \in \mathbb{R}^n$ be any nonnegative vector with $\bv^* \mathbf{1} = 1$, and $\phi$ be the unique stationary distribution vector for the random walk with probability transposition matrix $\bP$. Define the vector $\varepsilon^* := \bv^* - \bv^*\bP$, then 

\[ \| \phi - \bv \| \le \|\mathbf{\varepsilon}\|  \frac{1}{\sigma_2(\bI-\bP)} \]

where $( \cdot) ^+$ denotes the psuedoinverse and $\| \cdot \|$ is any vector norm (or its induced matrix norm).
\end{prop}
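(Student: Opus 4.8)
The plan is to expand $\phi - \bv$ in terms of the "residual" $\varepsilon^* = \bv^* - \bv^*\bP = \bv^*(\bI - \bP)$ and a suitable eigen-decomposition of $\bP$, then estimate the resulting linear map by the reciprocal of the smallest nonzero singular value of $\bI - \bP$. First I would record the two facts that make the argument go: since $\bP$ is a stochastic matrix, $\mathbf{1}$ is a right eigenvector of $\bP$ (hence of $\bI - \bP$) with eigenvalue $0$, and since $\phi$ is the stationary distribution, $\phi^*(\bI - \bP) = \mathbf{0}$, i.e. $\phi^*$ spans the left null space. Both $\phi$ and $\bv$ are probability vectors, so $(\phi - \bv)^*\mathbf{1} = 0$; thus $\phi - \bv$ lies in the orthogonal complement of $\mathbf{1}$, which is exactly the row space of $\bI - \bP^*$ — equivalently, $\phi - \bv$ lies in the column space of $(\bI - \bP)^*$, on which $(\bI - \bP)^*$ acts invertibly with smallest singular value $\sigma_2(\bI - \bP)$ (the smallest being $\sigma_1 = 0$).

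Next I would write the key identity. Set $\bw^* := (\phi - \bv)^*$. I claim $\bw^*(\bI - \bP) = -\varepsilon^*$: indeed $\phi^*(\bI-\bP) = \mathbf 0$, so $\bw^*(\bI - \bP) = -\bv^*(\bI - \bP) = -\varepsilon^*$. Now I want to invert this relation. Because $\bw$ is orthogonal to $\mathbf{1}$ (the right null vector of $\bI - \bP$), $\bw$ lies in the range of $(\bI - \bP)(\bI - \bP)^+$ acting on the left appropriately — more cleanly, one has $\bw^* = \bw^* (\bI-\bP)(\bI-\bP)^+$ because $(\bI - \bP)(\bI - \bP)^+$ is the orthogonal projection onto the column space of $\bI - \bP$, whose orthogonal complement is the left null space spanned by $\phi^*$... here I must be a little careful, since $\phi^*$ need not be orthogonal to $\bw^*$. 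The correct statement uses the projection onto $(\ker(\bI-\bP)^*)^\perp$; the cleanest route is: $\bw^* = -\varepsilon^* (\bI - \bP)^+ + \bw^*\big(\bI - (\bI-\bP)(\bI-\bP)^+\big)$, and the second term is a multiple of $\phi^*$ (it lies in the left null space), while $\bw^*$ is a difference of two distributions; one then shows this correction term vanishes by pairing with $\mathbf 1$ and using $\phi^*\mathbf 1 = 1 \ne 0$ to force the coefficient to be zero. This gives $(\phi - \bv)^* = -\varepsilon^*(\bI - \bP)^+$, hence $\|\phi - \bv\| \le \|\varepsilon\|\,\|(\bI-\bP)^+\| = \|\varepsilon\|\,\sigma_2(\bI-\bP)^{-1}$, using the identification of $\|\cdot\|$ with its induced matrix norm and the stated formula $\|\bM^+\| = \sigma_i(\bM)^{-1}$ for the smallest nonzero $\sigma_i$.

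The main obstacle is precisely the bookkeeping in the previous paragraph: $\bI - \bP$ is not symmetric (and not normal), so its left and right null spaces differ, and one cannot naively say "$\bw$ is in the range of $(\bI-\bP)^+$." One must verify that the component of $(\phi-\bv)^*$ in the left kernel of $\bI-\bP$ — i.e. the $\phi^*$-direction — is zero, which is where the constraint that both $\phi$ and $\bv$ sum to $1$ is essential; without it the proposition is false. I would handle this by decomposing $(\phi - \bv)^* = c\,\phi^* + \bu^*$ with $\bu^*$ in the row space of $\bI - \bP$, applying $(\bI-\bP)$ on the right to see $\bu^*$ is determined by $-\varepsilon^*$ via the pseudoinverse, and then pairing the whole identity with $\mathbf{1}$ to conclude $c = 0$. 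Everything else — the norm bound and the singular-value identity — is routine given the preliminaries.
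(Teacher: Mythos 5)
Your reduction to the linear system $(\phi-\bv)^*(\bI-\bP)=-\varepsilon^*$ is fine, and you correctly identify the crux: since $\bI-\bP$ is not normal, its right kernel is spanned by $\mathbf{1}$ while its left kernel is spanned by $\phi^*$, so the fact that $\phi-\bv$ sums to zero does not place it in the subspace on which the Moore--Penrose pseudoinverse inverts $\bI-\bP$ from the left. Unfortunately your proposed resolution does not close this gap. Writing $\bw^*=(\phi-\bv)^*$, the exact identity is
\[
\bw^* \;=\; -\varepsilon^*(\bI-\bP)^+ \;+\; \bw^*\bigl(\bI-(\bI-\bP)(\bI-\bP)^+\bigr),
\]
and $\bI-(\bI-\bP)(\bI-\bP)^+$ is the orthogonal projection onto $\ker\bigl((\bI-\bP)^*\bigr)=\mathrm{span}(\phi)$, so the correction term is $c\,\phi^*$ with $c=\phi^*\bw/\|\phi\|^2=1-\bv^*\phi/\|\phi\|^2$. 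This is not zero in general: for $\bv=\tfrac1n\mathbf{1}$ and any non-uniform $\phi$ one has $\bv^*\phi=\tfrac1n<\|\phi\|^2$. Pairing with $\mathbf{1}$ does not force $c=0$; it gives $c=\varepsilon^*(\bI-\bP)^+\mathbf{1}=\bv^*(\bI-\bP)(\bI-\bP)^+\mathbf{1}=1-\bv^*\phi/\|\phi\|^2$, the same nonzero quantity, because $(\bI-\bP)(\bI-\bP)^+$ projects orthogonally onto $\phi^{\perp}$, not onto $\mathbf{1}^{\perp}$. So your key identity $(\phi-\bv)^*=-\varepsilon^*(\bI-\bP)^+$ is false for the Moore--Penrose pseudoinverse, and what you actually bound is only the component of $\phi-\bv$ orthogonal to $\phi$, which does not control $\|\phi-\bv\|$ from above.

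The paper takes a different route that avoids this trap: it writes $\phi^*=\lim_{k}\bv^*\bP^k$, telescopes to $\bv^*\bP^k=\bv^*+\sum_{j<k}\varepsilon^*\bP^j$, and sums the series via the Jordan form with the eigenvalue-$1$ block zeroed out (Lemmas \ref{isperp}--\ref{0pi}). The operator $(\bI-\hat\bP)^{-1}$ so obtained is the group (Drazin) inverse of $\bI-\bP$, and for it the identity $\phi^*-\bv^*=\varepsilon^*(\bI-\hat\bP)^{-1}$ genuinely holds. Be aware, though, that Lemma \ref{0pi} verifies only two of the four Penrose conditions, which do not single out the Moore--Penrose pseudoinverse; the group inverse and the Moore--Penrose inverse of $\bI-\bP$ differ by exactly the $c\,\phi^*$ discrepancy above, and the norm formula $\|\bM^+\|=\sigma_2(\bM)^{-1}$ holds only for the Moore--Penrose inverse. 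The difficulty you ran into is therefore real and is present, in disguised form, in the paper's own argument as well---but as written your proof contains a concrete false step (the vanishing of $c$) that must be repaired, most naturally by switching to the power-series/group-inverse argument.
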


In what follows,
for $\bx \in \mathbb{C}^{n}$,
we let $\bx = \sum_{i=1}^n \rho_i \bu_i$ 
be the left (generalized) eigenvector decomposition of $\bx$ for probability transition matrix $\bP$
with eigenvalues $1 = |\lambda_1| < |\lambda_2| \le \ldots  \le |\lambda_n|$
and (generalized) left eigenvectors $\bu_i^*$ corresponding to eigenvalues $\lambda_i$.

\begin{lem}\label{isperp}
Let $\bx, \by \in \mathbb{R}^{n}$. 
If $\bx^* \bP = \bx^* + \by^*$, 
then $\by \perp \mathbf{1}$.
\end{lem}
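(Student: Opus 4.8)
The plan is to exploit the fact that $\mathbf{1}$ is a right eigenvector of $\bP$ with eigenvalue $1$: since $\bP$ is a (row-)stochastic matrix, each row sums to $1$, so $\bP \mathbf{1} = \mathbf{1}$. First I would take the hypothesis $\bx^* \bP = \bx^* + \by^*$ and multiply both sides on the right by the column vector $\mathbf{1}$. The left-hand side becomes $\bx^* (\bP \mathbf{1}) = \bx^* \mathbf{1}$, while the right-hand side becomes $\bx^* \mathbf{1} + \by^* \mathbf{1}$. Cancelling the common term $\bx^* \mathbf{1}$ (a scalar) from both sides leaves $\by^* \mathbf{1} = 0$, which is precisely the statement $\by \perp \mathbf{1}$.

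The only genuine content here is the observation that $\bP \mathbf{1} = \mathbf{1}$, which follows directly from the definition $\bP_{ij} = 1/d_i^{out}$ for each out-neighbor $j$ of $i$ (equivalently $\bP = \bD^{-1}\bA$, and $\bD^{-1}\bA\mathbf{1} = \bD^{-1}\bD\mathbf{1} = \mathbf{1}$), so that $\sum_j \bP_{ij} = 1$ for every $i$. I would state this as a one-line remark before doing the cancellation. There is no real obstacle: the argument is a two-line computation, and the "hard part," such as it is, is merely recognizing that one should test the vector identity against $\mathbf{1}$ on the right rather than trying to manipulate the left-eigenvector decomposition of $\bx$. (That decomposition is set up in the surrounding text presumably for the proof of Proposition \ref{cauchy}, where $\by \perp \mathbf{1}$ is used to guarantee that $\by$ has no component along $\bu_1^* \propto \phi^*$, so that $(\bI - \bP)$ restricted to the complement of $\mathrm{span}\{\mathbf{1}\}$ can be inverted with norm controlled by $1/\sigma_2(\bI - \bP)$.)

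Thus the proof is: observe $\bP\mathbf{1} = \mathbf{1}$; right-multiply $\bx^*\bP = \bx^* + \by^*$ by $\mathbf{1}$; simplify to $\bx^*\mathbf{1} = \bx^*\mathbf{1} + \by^*\mathbf{1}$; conclude $\by^*\mathbf{1} = 0$.
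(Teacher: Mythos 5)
Your proof is correct and is essentially the paper's argument: the paper likewise observes that row-stochasticity of $\bP$ forces $\bx^*\bP$ and $\bx^*$ to have the same entry sum, so $\by^* = \bx^*\bP - \bx^*$ sums to zero. Your version merely makes the step $\bP\mathbf{1} = \mathbf{1}$ explicit before right-multiplying by $\mathbf{1}$, which is the same computation.
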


\begin{proof}
Since $\bP$ is a probability transition matrix the sum of $\bx^*$ is the same as $\bx^* \bP$, the sum of $\bx^* \bP-\bx^*$ must be 0. Hence, $\by^* = \bx^* \bP - \bx^*$ sums to 0.
\end{proof}

\begin{lem}\label{leftpright}
Let $\bA$ be a real $n \times n$ matrix.
Suppose $\bx$ is a left generalized eigenvector with eigenvalue $\lambda_i$ and $\by^*$ is a right (not generalized) eigenvector with eigenvalue $\lambda_j$. Then, if $\lambda_i \ne \lambda_j$, $\by^* \bx = 0.$
\end{lem}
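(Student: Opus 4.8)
The plan is to exploit the biorthogonality between left generalized eigenvectors and ordinary right eigenvectors by sandwiching a suitable power of $(\bA - \lambda_i \bI)$ between the two vectors and evaluating the resulting scalar in two different ways. The crucial observation is that, although $\bx$ is only a \emph{generalized} eigenvector and so behaves in a complicated way under powers of $(\bA - \lambda_i\bI)$, the same operator acts on the \emph{ordinary} right eigenvector $\by$ simply as multiplication by a scalar.

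First I would record the two hypotheses explicitly. Since $\bx$ is a left generalized eigenvector for $\lambda_i$, there is a positive integer $k$ with $\bx^*(\bA - \lambda_i \bI)^k = \mathbf{0}$. Since $\by$ is an ordinary right eigenvector for $\lambda_j$, we have $\bA \by = \lambda_j \by$, equivalently $(\bA - \lambda_j \bI)\by = \mathbf{0}$.

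The key step is to consider the scalar $\bx^*(\bA - \lambda_i\bI)^k \by$ and evaluate it with two different groupings. Associating to the left, $[\bx^*(\bA - \lambda_i \bI)^k]\by = \mathbf{0}\,\by = 0$ by the first hypothesis. Associating to the right, I would first compute $(\bA - \lambda_i \bI)^k \by$: because $\by$ is an eigenvector of $\bA$ with eigenvalue $\lambda_j$, a one-line induction gives $(\bA - \lambda_i \bI)\by = \bA\by - \lambda_i\by = (\lambda_j - \lambda_i)\by$ and hence $(\bA - \lambda_i \bI)^k \by = (\lambda_j - \lambda_i)^k \by$. Therefore the right grouping yields $\bx^*\big[(\lambda_j - \lambda_i)^k \by\big] = (\lambda_j - \lambda_i)^k\, \bx^*\by$.

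Equating the two evaluations gives $(\lambda_j - \lambda_i)^k\, \bx^*\by = 0$. Since $\lambda_i \ne \lambda_j$, the scalar $(\lambda_j - \lambda_i)^k$ is nonzero, so $\bx^*\by = 0$, and finally $\by^*\bx = \overline{\bx^*\by} = 0$, which is the claimed conclusion. I do not expect a genuine obstacle here; the only points requiring care are to apply the power of the operator to $\by$ (on which it acts as a scalar) rather than attempting to analyze its action on the generalized vector $\bx$, and to track the complex conjugation so that the naturally appearing product $\bx^*\by$ is correctly related to the stated quantity $\by^*\bx$.
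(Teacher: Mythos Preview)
Your proof is correct and follows essentially the same approach as the paper: both sandwich $(\bA-\lambda_i\bI)^k$ between $\bx^*$ and $\by$, evaluate the scalar two ways to obtain $(\lambda_j-\lambda_i)^k\,\bx^*\by=0$, and conclude from $\lambda_i\ne\lambda_j$. Your extra remark that $\by^*\bx=\overline{\bx^*\by}$ is a nice touch, since the paper's own proof stops at $\bx^*\by=0$ without explicitly passing to the quantity $\by^*\bx$ in the statement.
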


\begin{proof}
Since $\bx$ is a generalized eigenvector, $\bx^* (\bA - \lambda_i \bI)^k = \mathbf{0}^*$ for some integer $k$. Consider $\bx^* (\bA - \lambda_i\bI)^k \by$. On one hand, we have $\bx^* (\bA - \lambda_i\bI)^k \by = \left(\bx^* (\bA - \lambda_i\bI)^k\right) \by = \mathbf{0}^* \by = 0$. On the other, $\bx^* (\bA - \lambda_i\bI)^k \by = \bx^* \left((\bA - \lambda_i\bI)^k \by\right) = \bx^*\by (\lambda_j-\lambda_i)^k$.
Hence, $0 =\bx^*\by (\lambda_j-\lambda_i)^k$. By hypothesis, $\lambda_i \ne \lambda_j$, so $\bx^*\by = 0$.
\end{proof}

\begin{lem}\label{perp0}
Let $\bx \in \mathbb{R}^{n}$ where $\bx = \sum_{i=1}^n \rho_i \bu_i$ 
is the left (generalized) eigenvector decomposition of $\bx$ for probability transition matrix $\bP$
Then, whenever $\bx^*\mathbf{1} = 0$,  $\rho_1 = 0$.
\end{lem}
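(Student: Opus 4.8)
The plan is to use the biorthogonality relation between left and right eigenvectors established in Lemma \ref{leftpright}, together with the identification of $\mathbf{1}$ as a right eigenvector of $\bP$. Since $\bP$ is a probability transition matrix, every row sums to $1$, so $\bP \mathbf{1} = \mathbf{1}$; that is, $\mathbf{1}$ is a right eigenvector of $\bP$ with eigenvalue $\lambda_1 = 1$. Meanwhile, $\bu_1^*$ is the left (generalized) eigenvector with eigenvalue $\lambda_1 = 1$, and by the standing assumption of irreducibility and ergodicity the eigenvalue $1$ is simple, so $\bu_1$ is a genuine eigenvector and we may take $\bu_1 = \phi$ (or any scalar multiple).

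The key steps, in order, are as follows. First, write out $\bx^* \mathbf{1} = \sum_{i=1}^n \rho_i \bu_i^* \mathbf{1}$ using the given decomposition. Second, for each $i \ge 2$, apply Lemma \ref{leftpright} with the left generalized eigenvector $\bu_i$ (eigenvalue $\lambda_i$) and the right eigenvector $\mathbf{1}$ (eigenvalue $\lambda_1 = 1$): since $|\lambda_i| \ge |\lambda_2| > 1 = |\lambda_1|$ forces $\lambda_i \ne 1$, we get $\bu_i^* \mathbf{1} = 0$. Third, substitute back to find that $\bx^* \mathbf{1} = \rho_1\, \bu_1^* \mathbf{1}$. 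Fourth, observe that $\bu_1^* \mathbf{1} \ne 0$: indeed $\bu_1$ is (a multiple of) the stationary distribution $\phi$, whose entries are nonnegative and sum to a positive number, so $\bu_1^* \mathbf{1} \ne 0$ regardless of normalization. Finally, the hypothesis $\bx^* \mathbf{1} = 0$ then yields $\rho_1\, \bu_1^*\mathbf{1} = 0$, hence $\rho_1 = 0$.

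The only real subtlety — and the step I would be most careful about — is justifying that $\bu_1^* \mathbf{1} \neq 0$, i.e., that the coefficient we are dividing out is nonzero. This relies on the Perron–Frobenius structure: because the chain is irreducible and ergodic, the eigenvalue $1$ is simple and its left eigenvector can be chosen with strictly positive entries (the stationary distribution $\phi$), so pairing it with the all-ones vector gives a strictly positive number. A secondary point to state cleanly is why $\lambda_i \neq \lambda_1$ for $i \ge 2$: this is immediate from the strict inequality $|\lambda_1| = 1 < |\lambda_2|$ recorded in the setup of the eigenvector decomposition. Both points are standard, so no lengthy computation is needed; the proof is essentially two lines once these observations are in place.
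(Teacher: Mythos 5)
Your proposal is correct and follows essentially the same route as the paper: expand $\bx^*\mathbf{1}$ in the decomposition, kill the terms with $i\ge 2$ via Lemma \ref{leftpright} applied to the right eigenvector $\mathbf{1}$, and identify $\bu_1$ with the stationary distribution $\phi$ so that $\bu_1^*\mathbf{1}\ne 0$ forces $\rho_1=0$. Your explicit Perron--Frobenius justification that $\bu_1^*\mathbf{1}\ne 0$ is a welcome bit of extra care that the paper handles implicitly by normalizing $\phi^*\mathbf{1}=1$.
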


\begin{proof}
\begin{eqnarray*}
0 = \bx ^* \mathbf{1} &=& \left(\sum_{i=1}^n \rho_i \bu_i\right)^* \mathbf{1} \\ 
&=& \left(\sum_{i=1}^n \rho_i \bu_i^* \mathbf{1} \right) \\
&=& \rho_1 \phi^* \mathbf{1} + \left(\sum_{i=2}^n \rho_i \bu_i^* \mathbf{1} \right) \\
&=& \rho_1 \phi^* \mathbf{1}  \\
&=&  \rho_1 \\
\end{eqnarray*}

where the fourth line follows from Lemma \ref{leftpright}.
\end{proof}

\begin{lem}\label{0pi}
For any vector $\bx \in \mathbb{R}^{n}$ with $\bx^* \bone = 0$. Suppose $\bP$ has a unique stationary distribution, then, 
, 
then the sum $$\bx^* \bI + \bx^* \bP + \bx^* \bP^2 + \ldots + \bx^* \bP^k + \ldots$$ is well-defined 
and is equal to  $\bx^* (\bI - \bP)^+$ 
where $(\cdot)^+$ is the pseudoinverse.
\end{lem}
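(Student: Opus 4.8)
The plan is to expand $\bx$ in the basis of left (generalized) eigenvectors and track how the partial sums behave under the geometric series. First I would write $\bx = \sum_{i=1}^n \rho_i \bu_i$, and invoke Lemma \ref{perp0}: since $\bx^* \bone = 0$, we have $\rho_1 = 0$, so the component along the stationary direction $\bu_1^* = \phi^*$ is absent. This is the crucial point, because the $\lambda_1 = 1$ eigenvalue is exactly the obstruction to convergence of $\sum_k \bx^* \bP^k$: on the orthogonal complement of $\bone$ the spectral radius of $\bP$ is $|\lambda_2| < 1$, so the series converges.

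Next I would make the convergence quantitative. For $i \ge 2$, the restriction of $\bP$ to the (generalized) eigenspace for $\lambda_i$ acts as $\lambda_i \bI + N$ with $N$ nilpotent, so $\bu_i^* \bP^k$ grows at most polynomially in $k$ times $|\lambda_i|^k$; since $|\lambda_i| < 1$, each term $\bx^* \bP^k$ is dominated by a convergent series and the infinite sum $S^* := \sum_{k=0}^\infty \bx^* \bP^k$ is well-defined. Then I would identify the limit: the partial sums telescope, $\sum_{k=0}^{m} \bx^*\bP^k (\bI - \bP) = \bx^*(\bI - \bP^{m+1})$, and letting $m \to \infty$ gives $S^*(\bI - \bP) = \bx^*$, using $\bx^* \bP^{m+1} \to 0$ (again because $\rho_1 = 0$). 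Symmetrically $S^* = \bx^* + S^* \bP$, so $S^*$ lies in the row space of $\bI - \bP$, i.e. $S^* = S^*(\bI-\bP)(\bI-\bP)^+$. Combining, $S^* = \bx^*(\bI-\bP)^+$, which is the claim.

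One technical point to be careful about is that $\bP$ need not be diagonalizable, which is why the statement and Lemma \ref{perp0} are phrased in terms of \emph{generalized} eigenvectors; the nilpotent-plus-scalar (Jordan) description of $\bP$ on each generalized eigenspace is what makes the "polynomial times $|\lambda_i|^k$" bound rigorous. I expect this — carefully justifying term-by-term convergence in the non-diagonalizable case, and the fact that $\bx$ having no $\bu_1$-component forces $\bx^*\bP^{m} \to 0$ — to be the main obstacle; once that is in hand, the telescoping identity and the characterization of the pseudoinverse via the row space are routine. An alternative, perhaps cleaner, route is to note that $\bI - \bP$ is invertible on the hyperplane $\bone^\perp$ (since $1$ is the only eigenvalue of $\bP$ on $\bone$, hence $0$ is not an eigenvalue of $\bI-\bP$ restricted there), that $\bx^*$ lies in this hyperplane, and that on this hyperplane $(\bI-\bP)^+$ agrees with the genuine inverse; the geometric series $\sum_k \bx^*\bP^k$ is then just the Neumann series for that inverse. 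I would likely present the spectral-decomposition version since the paper has already set up the eigenvector notation.
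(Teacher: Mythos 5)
Your first two steps parallel the paper's proof in substance: the paper likewise isolates the eigenvalue-$1$ component via the Jordan form (defining $\hat\bP$ with that eigenvalue zeroed out), obtains convergence of the Neumann series from the spectral mapping theorem, and uses Lemma \ref{perp0} to replace $\bx^*\bP^k$ by $\bx^*\hat\bP^k$. The genuine gap is in your final identification. From $S^*(\bI-\bP)=\bx^*$ you conclude $S^*=S^*(\bI-\bP)(\bI-\bP)^+$ ``because $S^*$ lies in the row space of $\bI-\bP$.'' But $(\bI-\bP)(\bI-\bP)^+$ is the orthogonal projector onto the \emph{column} space of $\bI-\bP$, which is $\phi^{\perp}$ (the orthogonal complement of the left null vector), so the identity you need is $S\perp\phi$; what the construction actually gives is $S\perp\bone$, since every term $\bx^*\bP^k$ annihilates $\bone$. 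These coincide only when $\phi$ is proportional to $\bone$. Concretely, for $\bP=\bigl(\begin{smallmatrix}1/2 & 1/2\\ 1/4 & 3/4\end{smallmatrix}\bigr)$ and $\bx^*=(1,-1)$ one gets $S^*=\sum_k\bx^*\bP^k=(4/3,\,-4/3)$, while the Moore--Penrose pseudoinverse gives $\bx^*(\bI-\bP)^+=(8/5,\,-4/5)$; both solve $\by^*(\bI-\bP)=\bx^*$, but they differ by a multiple of the left null vector $\phi^*\propto(1,2)$. The same objection defeats your ``alternative route'': the Neumann series computes the inverse of $\bI-\bP$ restricted to the hyperplane $\{\by^*:\by^*\bone=0\}$ (this is the group/Drazin inverse), whereas the Moore--Penrose pseudoinverse inverts $\bI-\bP$ between its row space and its column space, a different pair of subspaces when $\bP$ is not normal.

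In fairness, the discrepancy originates in the paper: its definition of pseudoinverse lists only the two identities $\bA\bA^+\bA=\bA$ and $\bA^+=\bA^+\bA\bA^+$ (which do not determine a unique matrix), and its proof of this lemma verifies exactly those two identities for $(\bI-\hat\bP)^{-1}$ --- so the paper really proves the statement for a reflexive generalized inverse, not for the Moore--Penrose pseudoinverse, and the later step $\|(\bI-\bP)^+\|=1/\sigma_2(\bI-\bP)$ in Proposition \ref{cauchy} does not then follow from the singular value decomposition. If you want a correct statement, either assert that the sum equals $\bx^*$ times the group inverse of $\bI-\bP$, or exhibit the specific matrix $(\bI-\hat\bP)^{-1}$ as the paper does. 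In short: your convergence and telescoping arguments are fine and match the paper's route, but the step identifying the limit with $\bx^*(\bI-\bP)^+$ is unjustified and, for the standard pseudoinverse, false.
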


\begin{proof}

Let $\bP = \bU \Lambda \bU^{-1}$ be the Jordan canonical form of $\bP$. Let $\hat \bP =  \bU \hat \Lambda \bU^{-1}$, where $\hat \Lambda$ replaces the 1 entry (from the eigenvalue of the stationary distribution) with 0 . By the spectral mapping theorem, 
since the eigenvalues of $\hat \bP$ are strictly within the unit circle in the complex plane, $(\bI-\hat \bP)^{-1} = \sum_{k=0}^{\infty} \hat \bP^k$. As a consequence of Lemma \ref{perp0}, since $\bone$ is a right eigenvector and $\bx^*\bone = 0$, we have that $\bx^* \bP = \bx^* \hat \bP$. Therefore $\bx^* \sum_{k=0}^{\infty} \hat \bP^k = \sum_{k=0}^{\infty} \bx^* \hat \bP^k =  \sum_{k=0}^{\infty} \bx^* \bP^k$.

It remains to show that $(\bI-\hat \bP)^{-1} = (\bI-\bP)^+$.  We apply the definition of psuedoinverse:
\begin{eqnarray*}
 (\bI-\bP) (\bI-\hat \bP)^{-1}  (\bI-\bP) &=& \bU (\bI - \Lambda)  (\bI - \hat \Lambda)^{-1}  (\bI - \Lambda) \bU^{-1}\\ &=& \bU  (\bI - \hat \Lambda) \bU^{-1}\\ 
 &=& (\bI-\bP) 
 \end{eqnarray*}
 Likewise,  
 \begin{eqnarray*}
 (\bI-\hat \bP)^{-1}  (\bI-\bP) (\bI-\hat \bP)^{-1} &=& \bU (\bI - \hat \Lambda)^{-1}  (\bI - \Lambda) (\bI - \hat \Lambda)^{-1} \bU^{-1}\\
  &=& \bU  (\bI - \hat \Lambda)^{-1} \bU^{-1}\\
   &=& (\bI-\hat \bP)^{-1}. 
   \end{eqnarray*}

\end{proof}

\begin{proof}[Proof of Proposition \ref{cauchy}]
$\phi^* = \lim_{k \to \infty} \bv^*\bP^k$. Since $\bv^*\bP = \bv^* + \varepsilon^*$, inductively we have $\bv^* \bP^k = \bv^* + \varepsilon^* + \varepsilon^* P + \cdots + \varepsilon^* \bP^{k-1}$. Since $\bv^*\bP = \bv^* + \varepsilon^*$, by Lemma \ref{isperp}, $\varepsilon^* \mathbf{1} = 0$. Hence by Lemma \ref{perp0}, the left eigenvector decomposition of $\varepsilon^*$ has $\rho_1 = 0$. By Lemma \ref{0pi}, the infinite sum $\bv^* + \varepsilon^* + \varepsilon^* \bP + \cdots + \varepsilon^* \bP^{k-1} + \cdots$ is well defined and is equal to $\bv^* + \varepsilon^* (I - \bP)^+$. It follows that $\phi^* - \bv^* =  \varepsilon^* (\bI - \bP)^+.$ Applying any vector norm, and noting that a unique stationary distribution implies $\sigma_2(\bI-\bP) \ne 0$, yields the desired inequality. 
\end{proof}

A simple method for computing the stationary distribution, as done in the PageRank algorithm, is to perform iterative matrix multiplication until the one-step total change is sufficiently small (i.e., the ``power method'') \cite{PageRankO}. Often, this method is justified by the spectral gap which implies an exponential convergence rate \cite{PageRankO}. While the spectral gap implies a quick \emph{long-term} convergence rate, the short-term behavior is determined by the singular values of the probability matrix and not the eigenvalues. In fact, there are examples of a Markov Chains that exhibit small one-step change yet it far from the stationary distribution. Further, there are examples of Markov chains where the random walk probabilities slowly take on several different distribution patterns before converging to the stationary distribution \cite{}. In this sense, it is na\"{i}ve to determine when to terminate the power method based upon the spectral gap and the one-step change. Rather, one should use the ``singular gap'', $ \frac{1}{\sigma_2(\bI-\bP)}$ instead of the spectral gap. In simple terms, the spectral gap determines the {\it long term convergence} whereas the singular gap determines the {\it short term convergence}. We give a more detailed discussion regarding PageRank in particular in Section \ref{examplePR}.

\section{Concentration of Expectation}\label{concen}
Our overall strategy will be to use Proposition \ref{cauchy}  to bound the expected stationary distribution against the the random stationary distribution. In doing so we must use various concentration inequalities.

\begin{lem}[Chernoff Bound] \cite{AandS}\label{Cher}
Let $X_i, \ldots, X_n$ be independent variables such that $X_i \in [0,1]$ for all $i = 1, \ldots, n$. Let $\overline{X} := \sum_{i=1}^n X_i$ and $\mu = \mathbb{E}\ol X$. Then,

\[ \mathbb{P} \left[  \left |\overline{X} - \mu \right| > \varepsilon \mu  \right ] < 2 \exp\left(\frac{-\varepsilon^2}{3} \mu \right)\]
\end{lem}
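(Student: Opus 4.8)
The plan is the textbook exponential-moment (Bernstein--Chernoff) argument, carried out separately on the two tails. First I would bound
\[ \mathbb{P}\left[ |\overline{X} - \mu| > \varepsilon \mu \right] \le \mathbb{P}\left[ \overline{X} \ge (1+\varepsilon)\mu \right] + \mathbb{P}\left[ \overline{X} \le (1-\varepsilon)\mu \right], \]
and aim to show that each summand is at most $\exp(-\varepsilon^2 \mu / 3)$; adding the two bounds then produces the stated factor of $2$.

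For the upper tail, fix a parameter $t>0$, apply Markov's inequality to the nonnegative variable $e^{t\overline{X}}$, and use independence of the $X_i$:
\[ \mathbb{P}\left[ \overline{X} \ge (1+\varepsilon)\mu \right] \le e^{-t(1+\varepsilon)\mu}\, \mathbb{E}\, e^{t\overline{X}} = e^{-t(1+\varepsilon)\mu} \prod_{i=1}^n \mathbb{E}\, e^{tX_i}. \]
Since $X_i \in [0,1]$, convexity of $x\mapsto e^{tx}$ gives the pointwise bound $e^{tX_i} \le 1 + (e^t-1)X_i$, whence $\mathbb{E}\, e^{tX_i} \le 1 + (e^t-1)\mu_i \le \exp((e^t-1)\mu_i)$ with $\mu_i := \mathbb{E} X_i$; multiplying over $i$ and using $\sum_i \mu_i = \mu$ yields $\mathbb{E}\, e^{t\overline{X}} \le \exp((e^t-1)\mu)$. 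The resulting bound $\exp((e^t - 1 - t(1+\varepsilon))\mu)$ is minimized at $t = \ln(1+\varepsilon)$, giving the classical estimate
\[ \mathbb{P}\left[ \overline{X} \ge (1+\varepsilon)\mu \right] \le \left( \frac{e^{\varepsilon}}{(1+\varepsilon)^{1+\varepsilon}} \right)^{\mu}. \]
The lower tail is entirely symmetric: applying Markov to $e^{-t\overline{X}}$ and optimizing at $t = -\ln(1-\varepsilon)$ gives $\mathbb{P}[\overline{X} \le (1-\varepsilon)\mu] \le \left( e^{-\varepsilon}/(1-\varepsilon)^{1-\varepsilon} \right)^{\mu}$.

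What remains is the elementary scalar inequality that, for $0 < \varepsilon \le 1$,
\[ \frac{e^{\varepsilon}}{(1+\varepsilon)^{1+\varepsilon}} \le e^{-\varepsilon^2/3} \qquad\text{and}\qquad \frac{e^{-\varepsilon}}{(1-\varepsilon)^{1-\varepsilon}} \le e^{-\varepsilon^2/2} \le e^{-\varepsilon^2/3}, \]
both obtained by taking logarithms and comparing Taylor series: $(1+\varepsilon)\ln(1+\varepsilon) - \varepsilon \ge \tfrac{\varepsilon^2}{2} - \tfrac{\varepsilon^3}{6} \ge \tfrac{\varepsilon^2}{3}$ on $(0,1]$, and $\varepsilon + (1-\varepsilon)\ln(1-\varepsilon) = \tfrac{\varepsilon^2}{2} + \tfrac{\varepsilon^3}{6} + \cdots \ge \tfrac{\varepsilon^2}{2}$. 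Raising these to the power $\mu \ge 0$ and summing the two tail estimates gives $\mathbb{P}[|\overline{X} - \mu| > \varepsilon\mu] < 2e^{-\varepsilon^2\mu/3}$. The only genuinely non-mechanical point --- the ``main obstacle,'' such as it is --- is verifying this scalar inequality with the specific constant $1/3$ and pinning down the range of $\varepsilon$ on which it is valid; everything else is routine bookkeeping. Since the lemma is quoted from \cite{AandS}, one could instead simply cite it, but the argument above is the standard self-contained route.
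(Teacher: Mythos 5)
The paper does not prove this lemma at all---it is quoted verbatim from Alon and Spencer---so there is nothing in the source to compare against except the citation. Your argument is the standard exponential-moment (MGF) proof of exactly the form found in that reference, and it is correct: the convexity bound $e^{tX_i}\le 1+(e^t-1)X_i$, the optimization at $t=\ln(1+\varepsilon)$, and the two scalar inequalities all check out (the alternating-series lower bound $(1+\varepsilon)\ln(1+\varepsilon)-\varepsilon\ge \tfrac{\varepsilon^2}{2}-\tfrac{\varepsilon^3}{6}\ge\tfrac{\varepsilon^2}{3}$ is valid precisely on $(0,1]$). The one point worth making explicit, which you correctly identify as the only real obstacle, is that the upper-tail inequality $\frac{e^{\varepsilon}}{(1+\varepsilon)^{1+\varepsilon}}\le e^{-\varepsilon^2/3}$ genuinely fails for large $\varepsilon$ (for $\varepsilon=10$ the left exponent is about $16.4$ versus $\varepsilon^2/3\approx 33.3$), since $(1+\varepsilon)\ln(1+\varepsilon)-\varepsilon$ grows only like $\varepsilon\ln\varepsilon$. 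So the lemma as stated in the paper, with no restriction on $\varepsilon$, is literally false; your proof establishes it under the (standard, and implicitly intended) hypothesis $0<\varepsilon\le 1$. This restriction is harmless for the paper, whose only application (Lemma \ref{simcondeg}) takes $\varepsilon=(\ol d_i^{\,out})^{-1/3}\le 1$, but it would be worth recording the hypothesis in the statement.
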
 

\begin{lem}[Simultaneous Concentration of Degrees] \cite{AandS} \label{simcondeg}
Let $\ol d^{in}_i$ and $\ol d^{out}_i$ denote the average in- and out-degrees of vertex $i$ respectively. Then,

\[ \left|\ol d^{in}_i -  d^{in}_i \right| < \left( \ol d^{in}_i \right)^{2/3} \text{ and }  \left|\ol d^{out}_i -  d^{out}_i \right| <  \left( \ol d^{out}_i \right)^{2/3} \]

holds for all vertices $i$ simultaneously with probability at least 


 \[1 - 4 n  \exp\left(\frac{- d_{min}^{1/3}}{3} \right)\]
 
 where $d_{min} = \min [d_{min}^{in}, d_{min}^{out}].$

\end{lem}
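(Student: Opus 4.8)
The plan is to apply the Chernoff bound of Lemma \ref{Cher} separately to each in-degree and each out-degree, and then take a union bound over all $2n$ such events. First I would fix a vertex $i$ and observe that $d_i^{in} = \sum_{j=1}^n \bA_{j,i}$ is a sum of independent Bernoulli random variables taking values in $[0,1]$, with mean $\ol d_i^{in}$; likewise $d_i^{out} = \sum_{j=1}^n \bA_{i,j}$ has mean $\ol d_i^{out}$. These are exactly the hypotheses of Lemma \ref{Cher}.

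Next, the idea is to choose the relative deviation parameter $\varepsilon$ in Lemma \ref{Cher} so that the \emph{absolute} deviation $\varepsilon \mu$ equals the target $\mu^{2/3}$. That is, for the in-degree of vertex $i$ I would set $\varepsilon = \varepsilon_i^{in} := (\ol d_i^{in})^{-1/3}$, so that $\varepsilon_i^{in}\, \ol d_i^{in} = (\ol d_i^{in})^{2/3}$ and $(\varepsilon_i^{in})^2\, \ol d_i^{in} = (\ol d_i^{in})^{1/3}$. Lemma \ref{Cher} then gives
\[
\mathbb{P}\!\left[\, \left| d_i^{in} - \ol d_i^{in}\right| > (\ol d_i^{in})^{2/3} \,\right] < 2\exp\!\left(\frac{-(\ol d_i^{in})^{1/3}}{3}\right),
\]
and the analogous bound holds for $d_i^{out}$ with $\ol d_i^{out}$ in place of $\ol d_i^{in}$.

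Then I would bound each exponential from above by $2\exp(-d_{min}^{1/3}/3)$, using that $\ol d_i^{in}, \ol d_i^{out} \ge d_{min}$ for every $i$ (so $(\ol d_i^{in})^{1/3} \ge d_{min}^{1/3}$ and the negative exponent only gets more negative), and that $\exp$ is increasing. Finally, a union bound over the $n$ in-degree events and the $n$ out-degree events — $2n$ events in total, each failing with probability at most $2\exp(-d_{min}^{1/3}/3)$ — shows that the complement, i.e.\ all the inequalities holding simultaneously, has probability at least $1 - 2n \cdot 2\exp(-d_{min}^{1/3}/3) = 1 - 4n\exp(-d_{min}^{1/3}/3)$, as claimed.

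There is no real obstacle here; the only points requiring a word of care are (i) confirming that $d_i^{in}$ and $d_i^{out}$ are genuinely sums of independent $[0,1]$-valued variables with the stated means — which is immediate from the definition of the random model, since $\bA_{i,j}\sim\mathrm{Bernoulli}(\ol\bA_{i,j})$ independently — and (ii) the monotonicity step that replaces each vertex-specific exponent $(\ol d_i^{in})^{1/3}$ or $(\ol d_i^{out})^{1/3}$ by the uniform lower bound $d_{min}^{1/3}$ before applying the union bound, so that all $2n$ terms can be combined into a single clean expression.
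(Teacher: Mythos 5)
Your proof is correct and follows essentially the same route as the paper: apply the Chernoff bound of Lemma \ref{Cher} to each in- and out-degree as a sum of independent Bernoulli indicators, then union bound over all $2n$ events after replacing each exponent by the uniform bound $d_{min}^{1/3}$. In fact your choice $\varepsilon = \mu^{-1/3}$ (so that $\varepsilon\mu = \mu^{2/3}$ and $\varepsilon^2\mu = \mu^{1/3}$) is the correct one, whereas the paper's stated $\varepsilon = (1/d_i^{out})^{2/3}$ is a typo that would not produce the claimed deviation or exponent.
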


\begin{proof}
Note that $d_i^{out} = \sum_{j=1}^{n} A_{ij}$ and $d_i^{in} = \sum_{j=1}^{n} A_{ji}$; both of which are sums of indicator random variables. Hence, by applying Lemma \ref{Cher}, with $\mu = d_i^{out}$ and $\varepsilon = \left( \frac{1}{ d_i^{out}} \right)^{2/3}$ individually for each vertex $i$ we have 

\[ \left|\ol d^{out}_i -  d^{out}_i \right| >  \left( \ol d^{out}_i \right)^{2/3} \] 

with probability at most

\[   2 \exp\left(\frac{- \left(d^{out}_i\right)^{1/3}}{3} \right) \]

Applying the union bound over all vertices, we have that $\left|\ol d^{out}_i -  d^{out}_i \right| <  \left( \ol d^{out}_i \right)^{2/3}$  for any vertex with probability at most $2 n \exp\left(\frac{- \left(d^{out}_i\right)^{1/3}}{3} \right)$

\end{proof}

\begin{lem}[Concentration of $ \ol \bD \bD^{-1} $]\label{DDbound}

\[ \| \bI - \ol \bD\bD^{-1} \|_2 \le {d_{min}^{-1/3}} \]

with probability at least 

 \[1 - 4 n  \exp\left(\frac{-d_{min}^{1/3}}{3} \right)\]

\end{lem}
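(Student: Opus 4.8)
The plan is to reduce the matrix statement $\|\bI - \ol\bD\bD^{-1}\|_2 \le d_{min}^{-1/3}$ to the vertex-wise degree concentration already established in Lemma \ref{simcondeg}. The key observation is that $\bI - \ol\bD\bD^{-1}$ is a \emph{diagonal} matrix, since both $\bD$ and $\ol\bD$ are diagonal. Consequently its spectral norm is simply the largest absolute value among its diagonal entries, namely $\max_i \left| 1 - \ol d_i^{out}/d_i^{out} \right| = \max_i \left| d_i^{out} - \ol d_i^{out} \right| / d_i^{out}$.

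First I would invoke Lemma \ref{simcondeg}, which tells us that with probability at least $1 - 4n\exp(-d_{min}^{1/3}/3)$, we have $\left| d_i^{out} - \ol d_i^{out}\right| < (\ol d_i^{out})^{2/3}$ for every vertex $i$ simultaneously (the out-degree half of that lemma suffices here, but using the full statement costs nothing and matches the stated probability). On this event, for each $i$,
\[
\left| 1 - \frac{\ol d_i^{out}}{d_i^{out}} \right| = \frac{|d_i^{out} - \ol d_i^{out}|}{d_i^{out}} < \frac{(\ol d_i^{out})^{2/3}}{d_i^{out}}.
\]
The remaining step is to control the right-hand side uniformly. On the same event, $d_i^{out} > \ol d_i^{out} - (\ol d_i^{out})^{2/3} = \ol d_i^{out}(1 - (\ol d_i^{out})^{-1/3})$, so
\[
\frac{(\ol d_i^{out})^{2/3}}{d_i^{out}} < \frac{(\ol d_i^{out})^{2/3}}{\ol d_i^{out}\left(1 - (\ol d_i^{out})^{-1/3}\right)} = \frac{(\ol d_i^{out})^{-1/3}}{1 - (\ol d_i^{out})^{-1/3}} \le \frac{d_{min}^{-1/3}}{1 - d_{min}^{-1/3}},
\]
using $\ol d_i^{out} \ge \ol d_{min} \ge d_{min}$ in the last bound (or whichever convention for $d_{min}$ versus $\ol d_{min}$ the paper intends). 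Taking the maximum over $i$ gives $\|\bI - \ol\bD\bD^{-1}\|_2 < d_{min}^{-1/3}/(1 - d_{min}^{-1/3})$.

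The main obstacle is purely cosmetic: the clean bound $d_{min}^{-1/3}$ as stated omits the $(1 - d_{min}^{-1/3})^{-1}$ correction factor, so the statement is to be read asymptotically (under $\ol d_{min} \gg n^k$ that factor tends to $1$), or the exponent should be adjusted slightly, or the right-hand side absorbed into a later ``$(1 + d_{min}^{-1/3})$'' factor exactly as happens in Theorem \ref{mainresult}. I would write the proof to produce the honest inequality $\|\bI - \ol\bD\bD^{-1}\|_2 \le d_{min}^{-1/3}(1 + o(1))$ and remark that this is what is used downstream. There is also a minor notational wrinkle about whether the bound should be phrased in terms of $d_{min}$ (the realized minimum degree) or $\ol d_{min}$ (the expected one); on the high-probability event the two are interchangeable up to the same $(1+o(1))$ factor, so it does not affect anything.
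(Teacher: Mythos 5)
Your proposal follows essentially the same route as the paper: invoke Lemma \ref{simcondeg}, observe that $\bI - \ol\bD\bD^{-1}$ is diagonal so its $2$-norm is the maximum of $\left|1 - \ol d_i^{out}/d_i^{out}\right|$, and bound that via the degree concentration. In fact you are slightly more careful than the paper, which divides the left side by $d_i^{out}$ but the right side effectively by $\ol d_i^{out}$ and thereby silently drops the $\left(1 - d_{min}^{-1/3}\right)^{-1}$ correction factor you correctly flag; your $(1+o(1))$ version is the honest form of the bound and is all that is needed downstream.
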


\begin{proof}
By Lemma \ref{simcondeg},  $\left|\ol d^{out}_i -  d^{out}_i \right| <  \left( \ol d^{out}_i \right)^{2/3}$
for all $i$ simultaneously with probability at least  $1 - 4 n  \exp\left(\frac{- d_{min}^{1/3}}{3} \right)$. In which case:

\begin{eqnarray*}
 \left|\ol d^{out}_i -  d^{out}_i \right| &<&  \left( \ol d^{out}_i \right)^{2/3} \\
 \left| \frac{\ol d^{out}_i}{d^{out}_i} -  \frac{d^{out}_i}{d^{out}_i } \right| &<&  \left( \ol d^{out}_i \right)^{-1/3} \\ 
 \left| \frac{\ol d^{out}_i}{d^{out}_i} -  1 \right| &<&  \left( \ol d^{out}_i \right)^{-1/3} \\
  \left| \frac{\ol d^{out}_i}{d^{out}_i} -  1 \right| &<&  \left( \ol d^{out}_{min} \right)^{-1/3} \\
 \end{eqnarray*}
 
Since the entries of $\ol \bD \bD^{-1}$ are precisely $\frac{\ol d^{out}_i}{d^{out}_i}$, this completes the proof.

\end{proof}

\begin{lem}[Bound on $\|\bA\|$] \label{Abound}
With probability at least $1 - 4 n  \exp\left(\frac{- d_{min}^{1/3}}{3} \right)$,
 \[ \| \bA \| \le (1+ d_{min}^{-1/3}) \sqrt{\ol d^{out}_{max} \ol d^{in}_{max}} \]
\end{lem}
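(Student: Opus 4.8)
The plan is to bound the spectral norm of the random matrix $\bA$ by comparing it to the entrywise product structure it inherits from $\ol\bA$, and then to control the deviation via a row/column-sum argument combined with the degree concentration already established in Lemma \ref{simcondeg}. For a nonnegative matrix, a convenient and elementary handle on $\|\bA\|_2$ is the bound $\|\bA\|_2 \le \sqrt{\|\bA\|_1 \|\bA\|_\infty}$, where $\|\bA\|_1$ is the maximum column sum and $\|\bA\|_\infty$ is the maximum row sum. Since $\bA$ is a $(0,1)$-matrix, the maximum row sum is exactly $d_{max}^{out} = \max_i d_i^{out}$ and the maximum column sum is exactly $d_{max}^{in} = \max_i d_i^{in}$. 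Hence deterministically $\|\bA\|_2 \le \sqrt{d_{max}^{out} d_{max}^{in}}$, and it remains only to replace the random quantities $d_{max}^{out}, d_{max}^{in}$ by their expected counterparts $\ol d_{max}^{out}, \ol d_{max}^{in}$.

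First I would invoke Lemma \ref{simcondeg}: with probability at least $1 - 4n \exp\left(\frac{-d_{min}^{1/3}}{3}\right)$, we have $|d_i^{out} - \ol d_i^{out}| < (\ol d_i^{out})^{2/3}$ and $|d_i^{in} - \ol d_i^{in}| < (\ol d_i^{in})^{2/3}$ for all $i$ simultaneously. On this event, for each $i$,
\[ d_i^{out} < \ol d_i^{out} + (\ol d_i^{out})^{2/3} = \ol d_i^{out}\left(1 + (\ol d_i^{out})^{-1/3}\right) \le \ol d_{max}^{out}\left(1 + d_{min}^{-1/3}\right), \]
and similarly $d_i^{in} \le \ol d_{max}^{in}(1 + d_{min}^{-1/3})$. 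Taking maxima over $i$ gives $d_{max}^{out} \le (1 + d_{min}^{-1/3}) \ol d_{max}^{out}$ and $d_{max}^{in} \le (1 + d_{min}^{-1/3}) \ol d_{max}^{in}$ on this event. Combining with the deterministic bound,
\[ \|\bA\|_2 \le \sqrt{d_{max}^{out} d_{max}^{in}} \le \sqrt{(1 + d_{min}^{-1/3})^2 \, \ol d_{max}^{out} \ol d_{max}^{in}} = (1 + d_{min}^{-1/3})\sqrt{\ol d_{max}^{out} \ol d_{max}^{in}}, \]
which is exactly the claimed inequality, holding on an event of probability at least $1 - 4n\exp\left(\frac{-d_{min}^{1/3}}{3}\right)$.

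The only subtlety — and the one step I would be careful about — is justifying $\|\bA\|_2 \le \sqrt{\|\bA\|_1\|\bA\|_\infty}$; this is the standard interpolation inequality $\|\bA\|_2 \le \|\bA\|_1^{1/2}\|\bA\|_\infty^{1/2}$ valid for any matrix (it follows, e.g., from $\|\bA\|_2^2 = \|\bA^*\bA\|_2 \le \|\bA^*\bA\|_\infty \le \|\bA^*\|_\infty\|\bA\|_\infty = \|\bA\|_1\|\bA\|_\infty$, using $\|\bA^*\|_\infty = \|\bA\|_1$), so no randomness or positivity is actually needed there. Everything else is a direct substitution using the high-probability degree bounds, so there is no real obstacle; the work is entirely in bookkeeping the $(1 + d_{min}^{-1/3})$ factor cleanly and noting that the failure probability is inherited verbatim from Lemma \ref{simcondeg}. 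An alternative route, if one wanted a self-contained probabilistic proof, would be to apply a matrix Bernstein or Chernoff-type inequality directly to $\bA = \ol\bA + (\bA - \ol\bA)$, bounding $\|\bA - \ol\bA\|_2$; but that is heavier machinery than needed, since the crude bound through row and column sums already delivers the stated estimate.
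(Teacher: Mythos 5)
Your proposal is correct and follows essentially the same route as the paper: the interpolation bound $\|\bA\|_2^2 \le \|\bA\|_1\|\bA\|_\infty$, identification of the row/column sums with the out-/in-degrees, and then Lemma \ref{simcondeg} to replace $d_{max}^{out}, d_{max}^{in}$ by $(1+d_{min}^{-1/3})\,\ol d_{max}^{out}, (1+d_{min}^{-1/3})\,\ol d_{max}^{in}$ on the high-probability event. Your write-up is in fact slightly more careful than the paper's in justifying the interpolation inequality and in tracking the $(1+d_{min}^{-1/3})$ factor explicitly.
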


\begin{proof}
$\|\bA\|_2^2 \le \|\bA\|_1 \|\bA\|_\infty$. Note $\|\bA\|_\infty$ is $d^{in}_{max}$ and $\|\bA\|_1$ is $d^{out}_{max}$. By Lemma \ref{simcondeg}, $d^{out}_{max} = \ol d^{out}_{max} (1 \pm \varepsilon)$ where $\varepsilon = \ol d_{min}^{-1/3}$, and  $d^{in}_{max} = \ol d^{in}_{max} (1 \pm \varepsilon)$ with probability at least as given in the hypothesis. Therefore, $\|\bA\|_2^2 \le  d^{out}_{max}  d^{in}_{max} (1 \pm \varepsilon)^2$.
\end{proof}

\begin{lem}[Bound on $\|\bA-\ol\bA\|$] \label{A-Abound}
With probability at least $1 - 4 n  \exp\left(\frac{- d_{min}^{1/3}}{3} \right)$,
 \[ \| \bA - \ol \bA\| \le (1+ d_{min}^{-1/3}) \sqrt[4]{\ol d^{out}_{max} \ol d^{in}_{max}} \]
\end{lem}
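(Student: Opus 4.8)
The plan is to use that $\bA-\ol\bA$ is a matrix of \emph{independent, mean-zero} entries, each bounded by $1$ in absolute value, whose row- and column-wise variance sums are controlled: $\sum_j \var(\bA_{ij})=\sum_j \ol\bA_{ij}(1-\ol\bA_{ij})\le \ol d^{out}_i\le \ol d^{out}_{max}$ and, symmetrically, $\sum_i \var(\bA_{ij})\le \ol d^{in}_{max}$. The deterministic interpolation $\|\cdot\|_2^2\le\|\cdot\|_1\|\cdot\|_\infty$ used for Lemma~\ref{Abound} is far too crude here: applied to $\bA-\ol\bA$ it yields only $\sqrt{\ol d^{out}_{max}\ol d^{in}_{max}}$ --- the \emph{square} of the target --- because it discards all cancellation among the $\pm$ fluctuations. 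So the estimate has to come from a genuinely probabilistic spectral bound.

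First I would pass to a high even moment: writing $\bM:=\bA-\ol\bA$, for every integer $k\ge 1$,
\[
  \mathbb{E}\,\|\bM\|^{2k}\;\le\;\mathbb{E}\,\mathrm{Tr}\big[(\bM\bM^*)^k\big].
\]
Expanding the trace gives a sum over closed length-$2k$ walks alternating between ``row'' and ``column'' vertices of $\bM$; by independence and $\mathbb{E}\bM_{ij}=0$, only walks in which every edge is traversed at least twice survive the expectation, and the leading contribution comes from the ``tree-like'' walks ($k$ distinct edges, each used exactly twice). A standard count --- the number of walk shapes is a Catalan number, and each newly visited column vertex contributes a factor $\le\ol d^{out}_{max}$, each newly visited row vertex a factor $\le\ol d^{in}_{max}$ --- bounds this by roughly $n\,c^{k}\,(\ol d^{out}_{max}\ol d^{in}_{max})^{k/2}$. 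Markov's inequality then yields
\[
  \mathbb{P}\big(\|\bA-\ol\bA\|\ge t\big)\;\le\;t^{-2k}\,n\,c^{k}\,(\ol d^{out}_{max}\ol d^{in}_{max})^{k/2},
\]
and choosing $k\asymp d_{min}^{1/3}$ with $t$ a constant multiple of $(\ol d^{out}_{max}\ol d^{in}_{max})^{1/4}$ produces a tail of the stated shape $4n\exp(-d_{min}^{1/3}/3)$; the hypothesis $\ol d_{min}\gg n^k$ of Theorem~\ref{mainresult} is what sends the trace factor $n^{1/(2k)}\to 1$.

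Two things carry the real weight, and these are where I expect the obstacles. First, pinning the degree exponents in the tree count to exactly $k/2$ --- so that the \emph{geometric} mean $(\ol d^{out}_{max}\ol d^{in}_{max})^{1/4}$ appears rather than a larger quantity such as $\max(\ol d^{out}_{max},\ol d^{in}_{max})^{1/2}$ --- uses that a closed alternating walk must return to its row-root, which forces the counts of newly visited row and column vertices to be balanced; this balancing is the subtle combinatorial point, and the Catalan count in any case only gives the correct \emph{order}, not the leading constant (for sparse $G(n,p)$ the sharp constant is $2$, not $1$), so squeezing the prefactor down to $1+d_{min}^{-1/3}$ would require a second-order refinement in the Bandeira--van Handel/free-probability vein or an extra regularity assumption on the degrees. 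Second, one must verify the \emph{non-tree-like} walks --- repeating an edge three or more times, or closing a cycle --- are genuinely lower order, each such defect costing essentially a factor $\ol d_{min}^{-1}$, which is where the polynomial lower bound on $\ol d_{min}$ is really consumed and where the $1+d_{min}^{-1/3}$ slack originates. A shorter but lossier alternative, costing a $\sqrt{\log n}$ factor, is to expand $\bM=\sum_{i,j}(\bA_{ij}-\ol\bA_{ij})\,\mathbf{E}_{ij}$ as a sum of independent bounded mean-zero matrices and apply matrix Bernstein with the two variance proxies above, or to combine an $\varepsilon$-net on the sphere with the Chernoff bound of Lemma~\ref{Cher} applied to each bilinear form $\bx^*(\bA-\ol\bA)\by$; both of those require treating the ``spiky'' test directions separately, so the moment method in the style of F\"uredi--Koml\'os and Chung--Lu--Vu is the route that stays closest to the stated bound.
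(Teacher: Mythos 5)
Your route is genuinely different from the paper's, and the comparison is instructive. The paper proves this lemma exactly as it proved Lemma \ref{Abound}: it applies $\|\bM\|_2^2 \le \|\bM\|_1\|\bM\|_\infty$ to $\bM = \bA - \ol\bA$ and then asserts that $\|\bA-\ol\bA\|_\infty$ equals the degree deviation $|\ol d^{in}_{max} - d^{in}_{max}|$, which Lemma \ref{simcondeg} makes small. You dismissed this interpolation as ``far too crude,'' and you are right, but for a reason worth making explicit: the induced $1$- and $\infty$-norms are maxima of \emph{absolute} column and row sums, $\max_i \sum_j |\bA_{ij}-\ol\bA_{ij}|$, and this is not $|d_i^{out}-\ol d_i^{out}|$ --- for $\ol\bA_{ij}\equiv 1/2$ the absolute row sum is exactly $n/2$ while the degree deviation is $O(\sqrt n)$. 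So the paper's two-line proof discards precisely the cancellation you were worried about and does not actually establish the stated bound; your instinct that a genuinely probabilistic spectral argument is required is correct, and the honest deterministic bound is indeed $O\bigl(\sqrt{\ol d^{out}_{max}\,\ol d^{in}_{max}}\bigr)$, the square of the target.

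Your trace--moment argument is the standard correct route (F\"uredi--Koml\'os, Vu, Chung--Lu--Vu) and does deliver a bound of the stated \emph{order}, $O\bigl((\ol d^{out}_{max}\ol d^{in}_{max})^{1/4}\bigr)$, with a failure probability of the stated shape, using $\ol d_{min}\gg n^{k}$ to kill the factor $n^{1/(2k)}$. The gaps you flag yourself are real and should not be papered over: controlling the non-tree-like walks and the row/column balancing that produces the geometric mean is standard but takes several pages, and the tree count gives leading constant $2$, not $1$. That last point is decisive: for directed $G(n,p)$ one has $\|\bA - p\bJ\| \to 2\sqrt{np(1-p)}$, so the lemma with prefactor $1+d_{min}^{-1/3}$ is not attainable by any method --- it is false as stated. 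Since the only downstream use, in the proof of Theorem \ref{mainresult}, needs only the order of magnitude, the right fix is to restate the lemma with an absolute constant in front of $(\ol d^{out}_{max}\ol d^{in}_{max})^{1/4}$; with that weakening your sketch is completable by the cited machinery, or, at the cost of the $\sqrt{\log n}$ you mention, by matrix Bernstein or an $\varepsilon$-net. In short: your approach is the sound one, it proves a correct (and for the paper's purposes sufficient) version of the lemma, and the discrepancy with the stated constant is a defect of the lemma, not of your argument.
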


\begin{proof}
The proof follows similarly to Lemma \ref{Abound}. $\|\bA-\ol \bA\|_2^2 \le \|\bA-\ol \bA\|_1 \|\bA-\ol \bA\|_\infty$. Note $\|\bA-\ol \bA\|_\infty$ is $|\ol d^in_{max} - d^{in}_{max}|$ and $\|\bA\|_1$ is $\ol d^{out}_{max} - d^{out}_{max}$. By Lemma \ref{simcondeg}, $|\ol d^{in}_{max} - d^{in}_{max}| =  \sqrt{\ol d^{out}_{max}} (1 \pm \varepsilon)$ where $\varepsilon = \ol d_{min}^{-1/3}$, and  $|\ol d^{out}_{max} - d^{out}_{max}| = \ol d^{out}_{max} (1 \pm \varepsilon)$ with probability at least as given in the hypothesis. Therefore, $\|\bA-\ol\bA\|_2^2 \le  \sqrt{d^{out}_{max}  d^{in}_{max} }(1 \pm \varepsilon)^2$.
\end{proof}

\section{Proof of Main Result}\label{mainproof}

\begin{lem}\label{phimaxhalf}
\[ \| \phi \| \le \phi_{max}^{1/2} \]
\end{lem}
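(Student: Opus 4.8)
The plan is to exploit the fact that $\phi$ is a stationary distribution, hence a nonnegative vector summing to $1$. The norm $\|\cdot\|$ here is the $2$-norm (per the convention fixed in Section \ref{pre}), so $\|\phi\|^2 = \sum_i \phi_i^2$, and the whole statement reduces to the elementary bound $\sum_i \phi_i^2 \le \phi_{max}$.

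First I would write $\|\phi\|^2 = \sum_{i=1}^n \phi_i^2$. Since every $\phi_i \ge 0$, I have $\phi_i^2 \le \phi_{max}\,\phi_i$ for each $i$, because $\phi_i \le \phi_{max}$. Summing over $i$ gives $\|\phi\|^2 = \sum_i \phi_i^2 \le \phi_{max} \sum_i \phi_i$. Then I would invoke $\sum_i \phi_i = 1$, which holds because $\phi$ is a probability distribution (the stationary distribution of an irreducible ergodic chain), to conclude $\|\phi\|^2 \le \phi_{max}$. Taking square roots of both sides — legitimate since both sides are nonnegative — yields $\|\phi\| \le \phi_{max}^{1/2}$.

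There is essentially no obstacle here; the only thing to be careful about is making explicit which norm is meant and which properties of $\phi$ are being used (nonnegativity and unit $\ell_1$-mass), both of which are guaranteed by the standing assumptions that the Markov chain is irreducible and ergodic so that $\phi$ exists, is unique, and is a genuine probability vector. One could equivalently phrase the key inequality as $\|\phi\|_2^2 \le \|\phi\|_\infty \|\phi\|_1 = \phi_{max} \cdot 1$, which is the standard interpolation bound between $\ell_1$, $\ell_2$, and $\ell_\infty$ norms; I would likely present it in that compact form.
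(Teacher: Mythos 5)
Your proof is correct, and it takes a genuinely different (and cleaner) route than the paper's. The paper argues by an extremal/optimization reasoning: it invokes Lagrange multipliers to claim that $\|\phi\|$ is maximized when as many coordinates as possible equal $\phi_{max}$, notes that the constraint $\sum_i \phi_i = 1$ permits at most $1/\phi_{max}$ such coordinates, and concludes $\|\phi\|^2 \le \phi_{max}^2 \cdot \phi_{max}^{-1} = \phi_{max}$. You instead prove the bound directly via the termwise inequality $\phi_i^2 \le \phi_{max}\,\phi_i$, summed and combined with $\sum_i \phi_i = 1$ --- equivalently the H\"older-type estimate $\|\phi\|_2^2 \le \|\phi\|_\infty \|\phi\|_1$. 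Your version is preferable: it is a two-line rigorous argument using only nonnegativity and unit mass, whereas the paper's appeal to Lagrange multipliers is informal (the objective is convex, so its maximum over the simplex slice is attained at extreme points, a fact the paper gestures at rather than proves). Both approaches yield exactly the same bound, and your explicit identification of the norm as the $2$-norm and of the properties of $\phi$ being used is a point in your favor.
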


\begin{proof}
Upon maximizing $\| \phi \|$ under the constraint $ 0 \le \phi_i \le \phi_{max}$ by applying Lagrange multipliers, it follows that $\| \phi \|$ is maximized when as many entries as possible are $\phi_{max}$. However, since $\sum_i \phi_i = 1$, there can be at most $1/\phi_{max}$ such entries. Hence, $\| \phi \|^2 \le \phi_{max}^2 \frac{1}{\phi_{max}} = \phi_{max}$.
 \end{proof}

\begin{proof}[Proof of Theorem \ref{mainresult}]
By applying Proposition \ref{cauchy}, with $\bv^* = \ol \phi$ we have,

\[ \| \phi - \ol \phi \| \le \|\ol \phi - \ol \phi\bP\| \frac{1}{\sigma_2(\bI-\bP)}\]

Hence, when considering the chi-squared norm with respect to $\ol \phi$:

\[ \| \phi - \ol \phi \|_{2,\ol \phi} \le \|\ol \phi - \ol \phi\bP\| ~\frac{1}{\sigma_2(\bI-\bP)}~\| \ol \Phi^{-1/2}\| \]

Since $\| \ol \Phi^{-1/2}\| = \ol \phi_{min}^{-1/2}$, it suffices to show that 

\[ \|\ol \phi - \ol \phi\bP\| \le  \ol \phi_{max}^{1/2}    \left(1+ \ol d_{min}^{-1/3}\right) \ol d_{min}^{-1} \left[ \sqrt[4]{\ol d^{out}_{max} \ol d^{in}_{max}}+  {\ol d_{min}^{-1/3}}\sqrt{\ol d^{out}_{max} \ol d^{in}_{max}} \right ]\]

Let us begin:

\begin{eqnarray*}
\| \ol \phi - \ol \phi\bP \| &=& \| \ol \phi - \ol \phi\bD^{-1}\bA \|  \\
&=& \| \ol \phi~ \ol \bD^{-1} \ol \bA - \ol \phi\bD^{-1}\bA \|  \\
&=& \| \ol \phi~ \ol \bD^{-1} \ol \bA -   \ol \phi~ \ol \bD^{-1} \bA  +  \ol \phi~ \ol \bD^{-1} \bA -  \ol \phi \bD^{-1} \bA \| \\
&\le& \| \ol \phi~ \ol \bD^{-1} \ol \bA -   \ol \phi ~\ol \bD^{-1} \bA \| + \| \ol \phi~ \ol \bD^{-1} \bA -  \ol \phi~ \bD^{-1} \bA\| \\
&=& \| \ol \phi~ \ol \bD^{-1} (\ol \bA -\bA) \| + \| \ol \phi~ (\ol \bD^{-1} - \bD^{-1} ) \bA\| \\
&\le& \| \ol \phi\| \left( \|\ol \bD^{-1}\| \|(\ol \bA -\bA) \| +  \| \ol \bD^{-1} (\ol \bI - \ol \bD \bD^{-1})\| \|\bA\| \right) \\
&\le& \| \ol \phi\| \left( \|\ol \bD^{-1}\| \|(\ol \bA -\bA) \| +  \| \ol \bD^{-1}\| \| \ol \bI - \ol \bD \bD^{-1}\| \|\bA\| \right) \\
&\le& \ol \phi_{max}^{1/2}  \ol d_{min}^{-1} \left[   \left(1+ \ol d_{min}^{-1/3}\right) \sqrt[4]{\ol d^{out}_{max} \ol d^{in}_{max}}~ \right] \\
&&  +~ \ol \phi_{max}^{1/2}  {\ol d_{min}^{-1}}  {\ol d_{min}^{-1/3}}  (1+ \ol d_{min}^{-1/3}) \sqrt{\ol d^{out}_{max} \ol d^{in}_{max}}
\end{eqnarray*}

where the second line follows as $\ol \phi$ is an eigenvector of $\ol \bP$ and the last line follows with probability at least $1 - 4 n  \exp\left(\frac{- d_{min}^{1/3}}{3} \right)$,
by noting $\| \ol \bD^{-1} \| = \ol d_{min}^{-1}$,
by applying Lemma \ref{phimaxhalf} to $\|\phi\|$, Lemma \ref{Abound} to $\|\bA\|$, Lemma \ref{DDbound} to $\| \ol \bI - \ol \bD \bD^{-1}\|$, and Lemma \ref{A-Abound} to $\| \bA - \ol \bA\|$.
\end{proof}

%
%

\section{Example: Gambler's Ruin} \label{examplesGR}

Gamblers ruin is classical example of a Markov chain. Let $G$ be a graph with vertices $1, \ldots, n$ with $i \to j$ only if $j-i = 1, 2$ or $-1$. In the context of gambling, a random walk on $G$ represents sequence of bets made by the gambler where the label of each vertex represents the amount of money the gambler has at any point in time.

Under this random walk, in order to get to vertex $n$ after reaching vertex $1$, the gambler must increase his money at least twice as often as it decreases.  However, the probability of decreasing is twice the probability of increasing, therefore, stationary distribution of vertex $n$ must be exponentially small.

To make this example random, suppose, additionally, an arc is placed from $1 \to n$, $n\to 1$ and $2 \to n$ each independently with probability $1/2$. If all there arcs are added, then in fact the graph is regular, and hence the stationary distribution is uniform. As a result, with probability of at least 1/8, the stationary distribution is uniform, and with probability of at least 1/8, the stationary distribution has exponentially small entries. Simple matrix calculations indicate that the stationary distribution of the ``expected`` random walk does not have exponentially small entries.

We now present a similar example illustrating the importance of the term $\| (\bI-\bP)^+ \|$ in Theorem \ref{mainresult}. We call it {\it Tourist's Ruin}. Let $G$ be a random directed graph on $n^2$ vertices labeled $(i,j)$ for $i, j = 1, \ldots, n$. 
We call the set of vertices which have the same first entry a {\it cluster}, and call two clusters with first entries which differ by $1 \mod n$, {\it adjacent clusters}. 
To generate the random graph, place an arc between vertices of the same cluster with probability 1/2.  Place an arc  between $(i,k)$ and $(j,k)$ with probability $\frac{k \log n}{n}$ if $i-j = 1 \mod n$ for some constant $k$. Hence, the graph consists of a cycle clusters.
Since the ``expected'' graph is regular, the stationary distribution of the``expected random walk'' is uniform. However, simple calculations show that the probability of the graph being not strongly-connected is bounded away from 0. This is an artifact of $\frac{1}{\sigma_2(\bI-\bP)}$ being large.

\section{Example: $G(n,p)$} \label{exampleGNP}

For undirected graphs, the classical Erd\"os-Reyni random graph model, $G(n,p)$, places an edge $\{i, j\}$ with probability $p$ for all pairs $i, j$ of $n$ vertices. This model is well-studied and has many implications in combinatorics and graph theory \cite{AandS}. However, in the context of Markov chains, $G(n,p)$ is not interesting as the degree distribution dictates the stationary distribution of the corresponding Markov chain- which is not the case for directed graphs.
Here, we investigate the Markoc chain on the directed variation of the Erd\"os-Reyni random graph model, $\overrightarrow{G}(n,p)$, where an arc $i \to j$ is placed with probability $p$ independently for all ordered pairs $(i,j)$. It is natural to believe that the since the degrees are concentrated, the stationary distribution must also be concentrated. However, it could be the case that even while all of the degrees are concentrated, the stationary may be skewed as in the previous example of Gambler's Ruin. Therefore, it is not obvious nor trivial that the stationary distribution of $\overrightarrow{G}(n,p)$ is nearly uniform. Using Theorem \ref{mainresult}, we can definitively show that the stationary distribution of $\overrightarrow{G}(n,p)$ is, in fact, approximately uniform.

\begin{cor}
If $p \gg n^{-1+k}$ for $k>0$, then the stationary distribution of $\overrightarrow{G}(n,p)$ converges to the uniform distribution under the chi-squared norm.
\end{cor}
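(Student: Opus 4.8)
The plan is to apply Theorem \ref{mainresult} directly to $\overrightarrow{G}(n,p)$, so the work is to estimate each of its four factors and check that their product tends to $0$.

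First I would record the expected quantities. For $\overrightarrow{G}(n,p)$ the expected adjacency matrix is $\ol\bA = p\,\mathbf{J}$ (all entries $p$; allowing or forbidding self-loops changes only lower-order terms), so every expected in- and out-degree equals $(1+o(1))np$; in particular $\ol d_{min} = \ol d_{max} = (1+o(1))np$. Consequently $\ol\bP = \ol\bD^{-1}\ol\bA = \tfrac1n\mathbf{J}$ is doubly stochastic, its left eigenvector with eigenvalue $1$ is the uniform distribution $\ol\phi = \tfrac1n\mathbf{1}$, and hence $\ol\phi_{max}/\ol\phi_{min} = 1$. The hypothesis $\ol d_{min}\gg n^{k'}$ of Theorem \ref{mainresult} holds with $k'=k$ because $\ol d_{min} = (1+o(1))np \gg n\cdot n^{-1+k} = n^{k}$; the same estimate gives $d_{min}^{1/3}\gg n^{k/3}\gg\log n$ (using Lemma \ref{simcondeg} to pass from $\ol d_{min}$ to $d_{min}$), so the failure probability $4n\exp(-d_{min}^{1/3}/3)$ tends to $0$ and all the high-probability events below may be intersected. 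One also notes that for $np\gg\log n$ the random graph is strongly connected and aperiodic with probability $1-o(1)$, so the stationary distribution $\phi$ is unique and the theorem applies.

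With these values the first factor of Theorem \ref{mainresult} is $1+o(1)$, the third factor $(\ol\phi_{max}/\ol\phi_{min})^{1/2}$ equals $1$, and the bracketed middle factor is
\[
\frac{(\ol d^{in}_{max}\ol d^{out}_{max})^{1/4}}{\ol d^{out}_{min}} + \frac{\sqrt{\ol d^{in}_{max}\ol d^{out}_{max}}}{\ol d_{min}^{4/3}} = (1+o(1))\left(\frac{(np)^{1/2}}{np} + \frac{np}{(np)^{4/3}}\right) = O\!\left((np)^{-1/3}\right).
\]
It remains to control the fourth factor $1/\sigma_2(\bI-\bP)$, and this is the one genuinely nontrivial step, since (as the discussion after Theorem \ref{mainresult} stresses) $\bI-\bP$ is random and its singular gap could a priori be large. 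I would show instead that $\bP$ concentrates around $\ol\bP$ in operator norm: decomposing exactly as in the proof of Theorem \ref{mainresult},
\[
\|\bP - \ol\bP\| = \|\bD^{-1}\bA - \ol\bD^{-1}\ol\bA\| \le \|\ol\bD^{-1}\|\,\|\bA-\ol\bA\| + \|\ol\bD^{-1}\|\,\|\bI - \ol\bD\bD^{-1}\|\,\|\bA\|,
\]
and then invoking $\|\ol\bD^{-1}\| = \ol d_{min}^{-1} = (1+o(1))(np)^{-1}$, Lemma \ref{A-Abound} ($\|\bA-\ol\bA\| = O(\sqrt{np})$), Lemma \ref{DDbound} ($\|\bI-\ol\bD\bD^{-1}\|\le d_{min}^{-1/3}$), and Lemma \ref{Abound} ($\|\bA\| = O(np)$) to obtain $\|\bP-\ol\bP\| = O((np)^{-1/2}) + O((np)^{-1/3}) = o(1)$. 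Since $\ol\bP = \tfrac1n\mathbf{J}$ is symmetric with eigenvalues $1$ (once) and $0$ ($n-1$ times), the matrix $\bI-\ol\bP$ is an orthogonal projection with $\sigma_1 = 0$ and $\sigma_2 = \ldots = \sigma_n = 1$; the standard perturbation inequality for singular values, $|\sigma_j(\bI-\bP) - \sigma_j(\bI-\ol\bP)| \le \|\bP-\ol\bP\|$, then yields $\sigma_2(\bI-\bP)\ge 1-o(1)$, i.e. $1/\sigma_2(\bI-\bP) = 1+o(1)$.

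Combining the four estimates, Theorem \ref{mainresult} gives, with probability $1-o(1)$,
\[
\|\phi - \ol\phi\|_{2,\ol\phi} \le (1+o(1))\cdot O\!\left((np)^{-1/3}\right)\cdot 1 \cdot (1+o(1)) \longrightarrow 0,
\]
and since $\ol\phi$ is the uniform distribution, this is exactly the claimed convergence in chi-squared norm. The main obstacle is the control of the random singular gap $\sigma_2(\bI-\bP)$; it is resolved by the operator-norm concentration $\|\bP-\ol\bP\| = o(1)$ together with the observation that the limiting matrix $\bI - \tfrac1n\mathbf{J}$ is a projection and hence has all of its nonzero singular values equal to $1$.
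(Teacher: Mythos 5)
Your proof is correct, and it reaches the same conclusion as the paper by a genuinely different route at the one nontrivial step, the control of the random singular gap $1/\sigma_2(\bI-\bP)$. The paper factors $\bI-\bP=\bD^{-1}(\bD-\bA)$, uses the product inequality $\sigma_2(\bI-\bP)\ge\sigma_1(\bD^{-1})\,\sigma_2(\bD-\bA)$, and then lower-bounds $\sigma_2(\bD-\bA)$ by writing $\bD-\bA=np\,\bI+\mathbf{E}-p\bJ-(\bA-p\bJ)$ and invoking the Bai--Yin theorem to get $\|\bA-p\bJ\|\le 2\sqrt{np}\,(1+o(1))$, yielding $\sigma_2(\bD-\bA)\ge np(1-o(1))$ and hence $\sigma_2(\bI-\bP)\ge 1-o(1)$. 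You instead bound $\|\bP-\ol\bP\|=o(1)$ by reusing the very decomposition and Chernoff-based lemmas (Lemmas \ref{Abound}, \ref{A-Abound}, \ref{DDbound}) already employed in the proof of Theorem \ref{mainresult}, observe that $\bI-\ol\bP$ is (up to a $1+o(1)$ scalar if self-loops are forbidden) the orthogonal projection onto $\mathbf{1}^{\perp}$ so that its $n-1$ nonzero singular values all equal $1$, and conclude via Weyl's perturbation inequality for singular values that $\sigma_2(\bI-\bP)\ge 1-\|\bP-\ol\bP\|=1-o(1)$. Your route is more self-contained --- it imports no external random-matrix theorem, sidestepping the delicate point that the quoted i.i.d.\ Bai--Yin statement requires a bounded normalized fourth moment, which degrades as $p\to 0$ --- and it generalizes more readily to any model for which $\|\bP-\ol\bP\|\to 0$ and the spectrum of $\bI-\ol\bP$ is understood. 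The paper's route, when it applies, attacks $\bD-\bA$ directly and does not need $\ol\bP$ to be close to a projection. Your additional remarks (checking $\ol d_{min}\gg n^k$ so the failure probability is $o(1)$, and noting strong connectivity/aperiodicity so that $\phi$ is unique) fill in details the paper leaves implicit.
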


\begin{proof}
Given that the ``expected'' probability transition matrix is uniform except for the identical diagonal entries the principal eigenvector of $\ol \bP$ is uniform. In addition, since  $p \gg n^k$ for any $k>0$, then Lemma \ref{simcondeg} applies, and all of the degrees are concentrated. Hence, using Theorem \ref{mainresult}, it suffices to show that $\sigma_2(\bI-\bP)$ is bounded away from 0.

Notice that $\bI - \bP = \bD^{-1} (\bD - \bA) = \bD^{-1} (np \bI + \mathbf{E} - \bA) = $ where $\bD=np\bI+\mathbf{E}$.  Hence $\sigma_2(\bI-\bP) \ge \sigma_1{\bD^{-1}} \sigma_2(np \bI + \mathbf{E} - \bA)$.

To bound $\sigma_2(np \bI + \mathbf{E} - \bA)$, We use the following result, adapted for our case, of Bai and Yin \cite{maxsig}:

\begin{thm}\cite{maxsig}
Let $\bM$ be a random asymmetric $n \times n$ matrix with i.i.d. entries with mean 0 and variance 1 and finite fourth moment. Then, $\|\bM\| \le 2\sqrt{n} (1+o(1))$ asymptotically almost surely.
\end{thm}

$np \bI + \mathbf{E} - \bA = np \bI + \mathbf{E} + p\bJ + (- p\bJ + \bA)$. Therefore, $\sigma_2(np \bI + \mathbf{E} - \bA) \ge \| np \bI \| - \|\mathbf{E}\| - \sigma_2(p\bJ) - \|\bA-p\bJ\|$. By applying the theorem above, we have asymptotically almost surely, $\frac{1}{\sqrt{p}} \|- p\bJ + \bA\| \le 2 \sqrt{n} (1+o(1))$. By the Chernoff Bound,  $\|\mathbf{E}\| \le (np)^{2/3}$. and since $\bJ$ is the all-ones matrix $\sigma_2(\bJ) = 0$. Altogether, we have asymptotically almost surely,

\begin{eqnarray*}
\sigma_2(\bD - \bA) &=& \sigma_2(np \bI + \mathbf{E} - \bA)\\
 &\ge& \| np \bI \| - \|\mathbf{E}\| - \sigma_2(p\bJ) - \|\bA-p\bJ\| \\
&\ge& np - (np)^{2/3}- 0 - 2 \sqrt{np}(1+o(1))\\
&=& np (1-o(1)) \\
\end{eqnarray*}

Therefore,
\begin{eqnarray*}
 \sigma_2(\bI-\bP)  &\ge& \sigma_1(\bD^{-1}) \sigma_2(\bD - \bA) \\
 &=& \frac{1}{ \ol d^{out}_{max}} (1-o(1)) \sigma_2(\bD - \bA)\\
&=& \frac{np (1-o(1))^2}{np}\\
&=& 1-o(1)\\
\end{eqnarray*}

Where the second and third lines follow from the concentration of degrees.
\end{proof}

\section{Example: PageRank} \label{examplePR}

PageRank is a special type of random walk on a graph. Given a constant $\alpha \in [0,1]$, at each step of the walk, a random walker currently at vertex $i$ has two options: with probability $1-\alpha$,  
 the walker follows an out-going arc chosen uniformly at random, and with probability $\alpha$, the walker jumps to another vertex chosen uniformly at random. However, if vertex $i$ has no out-going arcs, the walker employs the latter option. Hence, provided $d_{min}^{out} > 0$, the probability transition matrix for the PageRank Markov chain, $\mathbf{R}$, can be written as $\mathbf{R} = \frac{\alpha}{n-1} (\bJ - \bI) + (1-\alpha) \bP$ where $\bJ$ is the all-one matrix.
 
 The main advantage of PageRank is that the spectral gap (i.e., the largest nontrivial eigenvalue in modulus) is guaranteed to be at most $1-\alpha$. Hence, quick convergence in the long-term is guaranteed \cite{PageRankO}. However, the maximal one-step change is bounded by the singular values, or ``singular gap'', which may be quite small even with a large spectral gap.
 
While Theorem \ref{mainresult} only considers $(0,1)$-matrices, it can be quite easily adapted to other types of Markov chains, such as PageRank. (For example, by considering random $(\alpha/\ol d^{out}_i , 1+ \alpha/\ol d^{out}_i)$ matrices instead). PageRank gains an advantage over other Markov Chains with regard to Theorem \ref{mainresult} in that the new in- and out-degree of each vertex is at least $\alpha n$ {\it and} the stationary distribution of each vertex is at least $\alpha / n$. Hence, provided $\phi_{max} \ll n^-k$ for any $k>0$, and $\sigma_2(\bI - \mathbf{R})^{-1}$ is well behaved, the resulting PageRank on {\it almost any} matrix is concentrated. This is in contrast to Theorem \ref{mainresult} which can only apply to random graphs whose degree sequences are concentrated.

Bounding the singular gap for PageRank is not trivial. In fact, we present the following conjecture: 

\begin{conj}
For a directed graph where $d_{min}^{out} > 0$, the associated PageRank random walk with parameter $\alpha$ and probability transition matrix $\mathbf{R}$ obey
\[ \sigma_2(\bI - \mathbf{R})^{-1} \le \frac{k}{\alpha} \]
for some universal constant $k$.
\end{conj}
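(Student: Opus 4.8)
The plan is to prove the equivalent statement $\sigma_2(\bI-\mathbf{R}) \ge \alpha/k$, working from the variational characterization of the second smallest singular value. Since $\mathbf{R}$ is row-stochastic we have $\mathbf{R}\bone = \bone$, so $(\bI-\mathbf{R})\bone = \mathbf{0}$; and because teleportation makes the PageRank chain irreducible, $\bone$ spans the entire kernel. Hence $\bone$ is the right singular vector attached to the zero singular value, and $\sigma_2(\bI-\mathbf{R}) = \min_{\bx\perp\bone,\ \|\bx\|=1} \|(\bI-\mathbf{R})\bx\|$. The first simplification is that for $\bx\perp\bone$ one has $\bJ\bx = \bone(\bone^*\bx) = \mathbf{0}$, so the teleportation acts as a pure scalar shift: with $\beta = 1-\alpha$ and $\gamma = 1 + \frac{\alpha}{n-1}$ we obtain $(\bI-\mathbf{R})\bx = \gamma\bx - \beta\bP\bx = (\gamma-\beta)\bx + \beta(\bI-\bP)\bx$, where $\gamma-\beta = \frac{\alpha n}{n-1} \ge \alpha$. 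Thus the teleportation always contributes a clean term of size at least $\alpha$ in the direction of $\bx$, and the whole question is whether the walk term $\beta(\bI-\bP)\bx$ can destructively cancel it.

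I would then expand the squared norm. Setting $q = \langle \bP\bx,\bx\rangle$ (the numerical value of $\bP$ on $\bone^\perp$) and $\delta = 1-q = \langle(\bI-\bP)\bx,\bx\rangle$, a direct computation gives
\[ \|(\bI-\mathbf{R})\bx\|^2 = (\gamma-\beta)^2 + 2(\gamma-\beta)\beta\,\delta + \beta^2\|(\bI-\bP)\bx\|^2. \]
Two regimes settle immediately. If $q \le 1$, then $\delta \ge 0$ and every term is nonnegative, so $\|(\bI-\mathbf{R})\bx\|^2 \ge (\gamma-\beta)^2 \ge \alpha^2$, already giving the conjecture with $k=1$. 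More generally, Cauchy--Schwarz yields $\|(\bI-\bP)\bx\| \ge |\delta|$, whence $\|(\bI-\mathbf{R})\bx\|^2 \ge \big((\gamma-\beta) + \beta\delta\big)^2$; as long as the numerical value stays below $1 + \frac{\gamma-\beta}{2\beta}$ this perfect square is at least $\frac14(\gamma-\beta)^2 \ge \frac14\alpha^2$, giving $k=2$. So the entire difficulty is confined to the regime where $\bP$ has a large positive numerical value on $\bone^\perp$, with $q$ near the zero $\gamma/\beta$ of the perfect square.

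The crux — and the step I expect to be the genuine obstacle — is to exclude destructive cancellation in exactly that regime. There the Cauchy--Schwarz bound $\|(\bI-\bP)\bx\| \ge |\delta|$ is too weak, and what one must establish is a reverse inequality: when $q = \langle \bP\bx,\bx\rangle$ approaches $\gamma/\beta$, the vector $\bP\bx$ cannot be too nearly parallel to $\bx$, i.e. $\|\bP\bx\|^2$ must exceed $q^2$ by an $n$-independent margin, so that $\beta^2\|(\bI-\bP)\bx\|^2$ repays the negative cross term. The obstacle is that $\bP$ is highly non-normal: although every eigenvalue of a stochastic $\bP$ lies in the closed unit disk, non-normality permits transient amplification, so a unit $\bx\perp\bone$ can satisfy $\bP\bx \approx \mu\bx$ with $\mu>1$ in the $\ell_2$ sense even when no genuine eigenvalue exceeds $1$. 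To close the gap I would exploit the structure the conjecture actually grants — that $\bP = \bD^{-1}\bA$ arises from a $(0,1)$ adjacency matrix with $d^{out}_{\min}>0$, so each coordinate $(\bP\bx)_i$ is an honest average of entries of $\bx$ and hence $\|\bP\bx\|_\infty \le \|\bx\|_\infty$ — and attempt to upgrade this coordinatewise contraction to an $\ell_2$ statement, for instance by decomposing $\bx$ into dyadic level sets and summing the extreme-coordinate bound $(\gamma-\beta)\|\bx\|_\infty$ across levels. Turning this sup-norm control into a dimension-free $\ell_2$ lower bound, without leaking a factor of $\sqrt{n}$, is precisely where the argument must do real work and where I would concentrate the effort.
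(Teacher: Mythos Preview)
The statement you are attempting to prove is stated in the paper as a \emph{conjecture}, not a theorem; the paper offers no proof and explicitly flags it as open. There is therefore nothing in the paper to compare your argument against. The only evidence the paper supplies is numerical: the Gambler's Ruin family forces $k \ge 1.65637$, so the universal constant, if it exists, must be strictly larger than $1$.

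Your partial analysis is correct as far as it goes, and you are honest that it is incomplete. The reduction to $\min_{\bx\perp\bone,\,\|\bx\|=1}\|\gamma\bx-\beta\bP\bx\|$ is valid, the expansion of the squared norm is right, and the easy regimes ($q\le 1$, or more generally $q\le 1+\tfrac{\gamma-\beta}{2\beta}$) do fall out with $k\le 2$. But the paper's own numerics tell you that the hard regime you isolate --- $q=\langle\bP\bx,\bx\rangle$ large and positive on $\bone^\perp$, with $(\bI-\bP)\bx$ nearly aligned with $-\bx$ --- is not a phantom: Gambler's Ruin already pushes $k$ above $1.65$, so destructive cancellation genuinely occurs and any proof must control it quantitatively. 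Your proposed route through $\|\bP\bx\|_\infty\le\|\bx\|_\infty$ and dyadic level sets is a natural idea, but as you note, converting sup-norm contraction to a dimension-free $\ell_2$ bound is exactly the missing step, and nothing in the paper suggests how (or whether) it can be done. In short: you have correctly located the obstruction, but so has the paper --- which is why it is a conjecture.
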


Our calculations indicate that $k \ge 1.65637$ using the example of Gambler's ruin in the previous sections. Specifically, while the spectral gap $\bI-\mathbf{R}$ is guaranteed to be at least $\alpha$, that is not necessarily true for the singular gap. Nonetheless, we conjecture that the parameter $\alpha$, like the spectral gap, does, in fact, give a bound on the singular gap for all PageRank transition matrices.

\section{Conclusion}\label{discuss}

\end{document}